\numberwithin{equation}{section}
\newcommand{\rG}{{\rm G}}
\renewcommand{\epsilon}{\varepsilon}
\newcommand{\tr}{\mathop{\mathrm{tr}}\nolimits}
\def\<{\mathopen{}\left<}
\def\>{\right>\mathclose{}}
\def\({\mathopen{}\left(}
\def\){\right)\mathclose{}}
\newtheorem{theorem}{Theorem}
\newtheorem{corollary}{Corollary}
\newtheorem{definition}{Definition}
\newtheorem{lemma}{Lemma}
\newtheorem{problem}{Problem}
\newtheorem{proposition}{Proposition}
\newtheorem{remark}{Remark}
\numberwithin{equation}{section}
\author{Goncalo Oliveira \\ Duke University}
\title{Gerbes on $G_2$ Manifolds}
\date{March 2015}
\begin{document}
\maketitle

\begin{abstract}
On a projective complex manifold, the Abelian group of Divisors maps surjectively onto that of holomorphic line bundles (the Picard group). On a $G_2$-manifold we use coassociative submanifolds to define an analogue of the first, and a gauge theoretical equation for a connection on a gerbe to define an analogue of the last. Finally, we construct a map from the former to the later. Finally, we construct some coassociative submanifolds in twisted connected sum $G_2$-manifolds.
\end{abstract}


\section{Introduction}

In a complex manifold $X$ one defines the Abelian group of divisors, $Div(X)$, as formal sums of complex codimension $1$ submanifolds. On the other hand the holomorphic line bundles also form an Abelian grop, known as the the Picard group $Pic(X)$. It is a classical fact that $Pic(X) \cong Div(X)/ \sim$, where $\sim$ denotes linear equivalence of divisors. If $X$ is supposed to have a K\"ahler form, it follows from Hodge theory that each holomorphic line bundle has a unique Hermitian-Yang-Mills (HYM) connection, equivalently a unique connection with harmonic curvature. In this short note, inspired by Hitchin's work on the moduli of special Lagrangian submanifolds, we imitate part of these ideas from complex geometry to the case of $G_2$ manifolds.\\

Let $(X^7 , g)$ be a compact $G_2$-manifold, i.e. a compact, Riemannian $7$-manifold with holonomy contained in $G_2$. Equivalently, one can think of the metric $g$ has being induced from a stable $3$-form $\varphi$, which is harmonic. By this reason we shall also refer to a $G_2$-manifold as the pair $(X^7, \varphi)$. A $G_2$-manifold is said to be irreducible if in addition its holonomy representation is irreducible, i.e. $Hol(g)=G_2$. For future reference we shall use the notation $\psi = \ast \varphi \in \Omega^4(X, \mathbb{R})$, where $\ast$ is the Hodge-$*$ operator of the Riemannian metric $g$.\\
On a $G_2$-manifold there are very interesting calibrated submanifolds called associatives and coassociates. The latest are the main subject of this paper and so we now focus on them. These are $4$-dimensional submanifolds $N^4$ calibrated by $\psi$, i.e. $\psi \vert_N = dvol_{g\vert_N}$, where $g\vert_N$ denotes the restriction of the metric $g$ to $N$. We use them to define the following $G_2$ analogue of the group of divisors

\begin{definition}
Let $CDiv(X,\varphi)$ denote the Abelian group of finite formal sums 
$$\sum_{i=1}^k q_i N_i,$$
where the $q_i \in \mathbb{Z}$ and the $N_i$ are coassociative submanifolds.
\end{definition}


In \cite{Joyce2012} Joyce have conjectured that it may be possible to define an invariant of $G_2$-manifolds "counting" coassociative submanifolds. In \cite{Donaldson2009} Donaldson and Segal suggested it may be easier, to define an invariant from solutions of a gauge theoretical equation, the $G_2$-monopole equation. The authors have further suggested that such monopoles may be somehow related to coassociative submanifolds, and this have been further investigated in \cite{Oliveira2014}. This short note contains a detour, where we consider gauge theoretical objects of another nature, which show to also be related to coassociative submanifolds. Before we dive into their definition let us give some more motivation.\\
Given a coassociative submanifold $N^4$, its Poincar\'e dual gives a cohomology class $H^3(X, \mathbb{Z})$. Conversely, one can ask the following questions: Given $\alpha \in H^3(X, \mathbb{Z})$, is there a coassociative representative of $\alpha$? What are the possible topological obstructions for a class $\alpha$ to be represented by a coassociative submanifold? (for an integrable $G_2$-structure as $H^4_7=0$ we do not know any such). Given a class $\alpha$, are there any toplogical/geometric restrictions of possible coassociative representatives of $\alpha$?
Some of these problems are currently out of reach and is certainly not the author's goal to attempt it. Nevertheless, we believe our results may be of interest and we hope may motivate further research along similar lines. We give some possibilities for these at the end of this note.\\

We turn now to the definitions of the gauge theoretical objects, which are our $G_2$ analogues of the Picard group. Recall that we want these to be related to coassociative submanifolds, which are of codimension $3$, this is where gerbes with connection come into play. For the sake of simplicity, in this introduction it is enough to think of a gerbe as a \v Cech cocycle $\mathcal{G} \in \check{H}^2(X, C^{\infty}(S^1))$, where $C^{\infty}(S^1)$ is the sheaf of smooth functions with values in $S^1$. This should be compared with the way a cocycle in $\check{H}^1(X, C^{\infty}(S^1))$ gives a circle bundle. Gerbes, as bundles can be equipped with connections. There is the notion of a trivialization of a gerbe, and if $\lbrace U_i \rbrace_{i \in I}$ is an open cover of $X$, such that on each $U_i$ we have gerbe trivializations, then a connection is determined by connection $2$-forms $F_i$ satisfying some extra conditions, see section \ref{sec:Gerbes} for more details. These conditions ensure that the exterior derivatives $dF_i$ agree on double intersections and so define a global, closed $3$-form $H$ called the curvature of the connection.\\
The long exact sequence induced by the exponential gives an isomorphism 
$$c_1: \check{H}^2(X, C^{\infty}_X(S^1)) \xrightarrow{\sim} H^3(X, \mathbb{Z}).$$
The image $c_1(\mathcal{G})$ of a gerbe $\mathcal{G}$ under such a map is its first Chern class, and the curvature $H$ of any connection on $\mathcal{G}$ lies in the cohomology class $c_1(\mathcal{G})$. We are now ready to define our $G_2$-analogue of the Picard group

\begin{definition}\label{def:MonopoleGerbe}
A \textbf{monopole gerbe} is a pair $(\mathcal{G}, F)$, where $\mathcal{G}$ is a gerbe and $F$ a connection on $\mathcal{G}$, such that
\begin{itemize}
\item There is an open cover $\lbrace U_{i} \rbrace_{i \in I}$ of $X$ equipped with local trivializations of $\mathcal{G}$, and connection $2$-forms $F_i$ satisfying
$$\ast (F_i \wedge \psi) = d \phi_i,$$
for $\phi_i$ some real valued functions.
\item The curvature of $F$ is the harmonic representative of $c_1(\mathcal{G})$.
\end{itemize}
The monopole gerbes form an Abelian group, which we denote $MPic(X, \varphi)$.
\end{definition}

The operation under which the monopole gerbes form an Abelian group is the tensor product of the underlying gerbes and the canonically induced connections, see section \ref{sec:MonopoleGerbes} below.

\begin{remark}
We compare each of the conditions above with the definition of an holomorphic line bundle, or rather a complex line bundle with a connection inducing an holomorphic structure
\begin{itemize}
\item The first condition is the analogue of the existence of holomorphic trivializations where the connection $1$-forms are of type $(1,0)$.
\item Requiring the curvature of a gerbe connection to be the harmonic representative of $c_1(\mathcal{G})$ is the analogue of in the case of an holomorphic bundle changing hermitian structure so that the connection has harmonic curvature. In the K\"ahler case there is a unique such connection (up to gauge) and this is the so called Hermitian-Yang-Mills one (HYM).
\end{itemize}
\end{remark}

In the general context of a Riemannian manifold, Hitchin defined in \cite{Hitchin99} a way to canonically associate a gerbe with connection to a codimension $3$ submanifold. We review this construction in proposition \ref{Connection}. We are now ready to state our

\begin{theorem}\label{thm:1}
Let $(X^7, \varphi)$ be a compact, irreducible $G_2$-manifold, then Hitchin's construction yields a group homomorphism
\begin{equation}\label{eq:Map}
m: CDiv(X, \varphi) \rightarrow MPic(X, \varphi).
\end{equation}
\end{theorem}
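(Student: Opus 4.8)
The plan is to define $m$ on the generators of $CDiv(X,\varphi)$ and extend it multiplicatively, checking along the way that the output genuinely lands in $MPic(X,\varphi)$. Given a coassociative $N$, the calibration condition $\psi\vert_N = \dvol$ makes $N$ a smooth oriented codimension-$3$ submanifold, so Proposition \ref{Connection} (Hitchin's construction) applies and produces a gerbe $\mathcal{G}_N$ with a connection whose first Chern class is $c_1(\mathcal{G}_N) = \PD[N] \in H^3(X,\mathbb{Z})$. I would first normalize the connection. The connections on a fixed gerbe form an affine space modelled on global $2$-forms $B$, under which the local forms change by $F_i \mapsto F_i + B$ and the curvature by the exact form $dB$; since any curvature already represents $c_1(\mathcal{G}_N)_{\mathbb{R}}$, Hodge decomposition writes it as its harmonic part plus an exact part $d\alpha$, and subtracting $\alpha$ arranges the curvature $H_N$ to be exactly the harmonic representative of $c_1(\mathcal{G}_N)$. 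This secures the second bullet of Definition \ref{def:MonopoleGerbe} by construction, and I set $m(N) := (\mathcal{G}_N, F_N)$.

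The heart of the argument is the first bullet, and here the irreducibility hypothesis is decisive. Since $\Hol(g)=G_2$ forces $b_1(X)=0$, there are no harmonic $1$-forms, and because the parallel forms $\varphi,\psi$ identify $\Omega^3_7$ with $\Omega^1$ compatibly with the Laplacian, every harmonic $3$-form lies in $\Omega^3_1\oplus\Omega^3_{27}$; in particular the $7$-component $\pi_7(H_N)$ vanishes. I would then use that wedging a $2$-form with $\psi$ and applying $\ast$ realizes the algebraic projection $\Omega^2\to\Omega^2_7\cong\Omega^1$ with kernel $\Omega^2_{14}$, so the monopole equation $\ast(F_i\wedge\psi)=d\phi_i$ asks precisely that the $\Omega^2_7$-part of each local connection $2$-form, read as a $1$-form, be exact. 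The key step is to show that $\pi_7(H_N)=0$ permits a choice, over a contractible cover $\{U_i\}$ compatible with the gerbe gluing data, of local primitives of $H_N$ satisfying this: replacing $F_i$ by $F_i+d\beta_i$ preserves $dF_i=H_N$ and shifts $\ast(F_i\wedge\psi)$ by $\ast(d\beta_i\wedge\psi)=\ast\, d(\beta_i\wedge\psi)$ (using $d\psi=0$), and one solves the resulting linear equation for $\beta_i$ on each $U_i$, the obstruction being governed by $\pi_7(H_N)$, which we have arranged to vanish. This local solvability, together with its compatibility with the transition data $A_{ij}$ on overlaps, is the step I expect to be the main obstacle, since it is where the $G_2$ representation theory and the Hodge theory must be combined carefully.

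Finally I would upgrade $m$ to a homomorphism by extending $m\big(\sum_i q_i N_i\big)=\bigotimes_i (\mathcal{G}_{N_i},F_{N_i})^{\otimes q_i}$, using the tensor product of gerbes-with-connection that defines the group law on $MPic(X,\varphi)$. Everything here is additive: $c_1$ is additive under tensor product and $\PD$ is additive on $CDiv(X,\varphi)$, so $c_1(m(\sum_i q_iN_i))=\sum_i q_i\,\PD[N_i]=\PD[\sum_i q_iN_i]$; the harmonic projection is linear, so the tensor-product curvature $\sum_i q_i H_{N_i}$ is again harmonic; and since $\ast(\,\cdot\,\wedge\psi)$ is linear, the induced local connection $2$-forms again satisfy an equation of the form $\ast(F_i\wedge\psi)=d\phi_i$. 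Thus both bullets pass to tensor products and inverses, $m(\sum_i q_iN_i)$ is a monopole gerbe, and $m$ is a group homomorphism, completing the proof once the local step of the previous paragraph is established.
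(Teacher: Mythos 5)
Your outline has a genuine gap, and it sits exactly where the theorem's content lies: nowhere in your argument do you use that $N$ is \emph{coassociative}. Note that $\pi_7(H_N)=0$ follows from irreducibility alone (harmonicity plus the Bochner argument), for \emph{any} codimension-$3$ submanifold; so if your scheme worked as written, it would produce a monopole gerbe out of every codimension-$3$ divisor, contradicting the paper's own remark (after the proof of Theorem \ref{thm:2}) that Hitchin's construction yields a monopole gerbe \emph{only} when the submanifold is coassociative. The reason your plan cannot be completed as stated is the distinguished open set $U_0=X\setminus N$ in Hitchin's cover: it is not contractible, so your ``solve the correction equation on each contractible $U_i$'' step does not apply to it, and on $U_0$ the connection form is canonically $F_0=d^*H_0$, where $H_0$ solves the current equation $\Delta H_0=H-\delta_N$. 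The paper's proof (Theorem \ref{th:MonopoleGerbe}) handles $U_0$ not by correcting a primitive but by showing that this canonical $F_0$ \emph{already} satisfies the monopole equation, and coassociativity is what makes this work: Lemma \ref{lem:Coassociative} gives $\delta_N\wedge\varphi=0$ (this is precisely $\varphi\vert_N=0$), hence $\delta_N$ as well as $H$ has vanishing $\Lambda^3_7$-part; since the Laplacian on a $G_2$-manifold preserves the type decomposition, $\pi_7(H_0)$ is harmonic and therefore zero; writing $\pi_1(H_0)=-a\varphi$, the closedness of $H_0$ gives $d\pi_{27}(H_0)=da\wedge\varphi$, and an explicit $G_2$-identity computation yields $\ast\left(d^*H_0\wedge\psi\right)=d(7a)$. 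None of this appears in your proposal, and it cannot be replaced by a local exactness argument.

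Two further points. First, the step you explicitly defer (``the step I expect to be the main obstacle'') is the one the paper does carry out on the contractible $U_\alpha$'s: the ansatz $(a_\alpha,\phi_\alpha)=\left(\ast(db_\alpha\wedge\psi),\,-d^*b_\alpha\right)$ converts the monopole equation for $F_\alpha=F_\alpha'+da_\alpha$ into $\nabla^*\nabla b_\alpha=-\ast(F_\alpha'\wedge\psi)$ (Ricci-flatness gives $\Delta=\nabla^*\nabla$ on $1$-forms), solved as a Dirichlet problem by minimizing a coercive functional; contrary to your guess, the obstruction here is not ``governed by $\pi_7(H_N)$'' --- that problem is solvable regardless, which is exactly why this step alone proves Theorem \ref{thm:2} but not Theorem \ref{thm:1}. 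Second, if you modify the connection by non-canonical corrections $d\beta_i$ on a refined cover, the assignment $N\mapsto(\mathcal{G}_N,F_N)$ depends on choices, so both well-definedness of $m$ and the additivity $m(N_1+N_2)=m(N_1)\otimes m(N_2)$ demanded by ``group homomorphism'' are left unproven; the paper secures these because the connection is the canonical one from Proposition \ref{Connection} (with Lemma \ref{lem:2} identifying the corrected $U_\alpha$-forms with the canonical ones, and Lemma \ref{ProductLemma} giving independence of the splitting $H=\sum_i H_i$). In short: what you propose is, in substance, the paper's proof of Theorem \ref{thm:2}, which needs no coassociativity, rather than a proof of Theorem \ref{thm:1}.
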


There is a forgetful map $MPic(X, \varphi) \rightarrow H^3 (X, \mathbb{Z})$, which associates to $(\mathcal{G},F)$ the topological type of the gerbe $c_1(\mathcal{G})$. We prove the following analogue of the hard-Lefschetz theorem for $(1,1)$ classes

\begin{theorem}\label{thm:2}
Let $(X, \varphi)$ be a compact, irreducible $G_2$-manifold, then the map $MPic(X, \varphi) \rightarrow H^3 (X, \mathbb{Z})$ is surjective.
\end{theorem}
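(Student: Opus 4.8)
The statement is an existence result: for every class $\alpha \in H^3(X,\mathbb{Z})$ I must produce a monopole gerbe $(\mathcal{G},F)$ with $c_1(\mathcal{G})=\alpha$, and I plan to build the three ingredients separately. First, the underlying gerbe is free: the isomorphism $c_1 : \check{H}^2(X, C^\infty_X(S^1)) \xrightarrow{\sim} H^3(X,\mathbb{Z})$ furnishes a gerbe $\mathcal{G}$ with $c_1(\mathcal{G})=\alpha$. Second, I fix the curvature to be the harmonic representative $H$ of $\alpha$: the curvings of a connection on $\mathcal{G}$ realise precisely the closed $3$-forms in the de Rham class $\alpha_{\mathbb{R}}$, and two connections differ by a global $2$-form $B$ (changing the curvature by $dB$), so I may always arrange the curvature to be $H$. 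This disposes of the second bullet of Definition \ref{def:MonopoleGerbe}, and the whole content lies in the first.

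That first bullet asks for local curvings $F_i$, with $dF_i=H$ on a good cover $\{U_i\}$, satisfying $\ast(F_i\wedge\psi)=d\phi_i$. Using the $G_2$-decomposition $\Omega^2=\Omega^2_7\oplus\Omega^2_{14}$ together with $\Omega^2_{14}\wedge\psi=0$ and the pointwise identity that $\ast\big((\iota_X\varphi)\wedge\psi\big)$ is a nonzero multiple of $X^\flat$, the left-hand side sees only the $\Omega^2_7$-part of $F_i$: writing $\pi_7 F_i=\iota_{X_i}\varphi$, the requirement reduces to asking that the $1$-form $X_i^\flat$ dual to the $7$-part of the curving be exact, i.e. (on contractible $U_i$) closed. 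Since a primitive $F_i$ of $H$ is determined only up to an exact $2$-form $d\mu_i$, this becomes: choose $\mu_i$ so that $\ast\big((F_i^0+d\mu_i)\wedge\psi\big)$ is closed, where $F_i^0$ is a fixed local primitive. This is a second-order linear equation for $\mu_i$ — and, since the symbol of $\mu\mapsto\pi_7\,d\mu$ fails to be surjective onto $\Omega^2_7$, it is not elliptic — to be solved on each patch and then reconciled across overlaps.

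The crux, and the only place irreducibility enters, is the solvability of this system together with its \v Cech compatibility. The key input is that $\Hol(g)=G_2$ forces $b^1(X)=0$, whence there are no nonzero harmonic $1$-forms and the refined Hodge decomposition yields $\cH^3_7\cong\cH^1=0$; in particular the harmonic curvature obeys $\pi_7 H=0$. I expect this vanishing to trivialise the obstruction: the per-patch failures of $\ast(F_i\wedge\psi)$ to be exact differ across $U_i\cap U_j$ by $\ast(F_{ij}\wedge\psi)$, with $F_{ij}=F_i-F_j$ the closed, integral curvatures of the gerbe's line bundles, and these assemble into a global class valued in the (absent) space of harmonic $1$-forms. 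Rather than gluing Poincar\'e-lemma primitives naively, I would therefore solve the problem globally by Hodge theory on the compact manifold, using $\delta H=0$ and $\pi_7 H=0$ to manufacture curvings whose $7$-part is exact on each $U_i$ and whose defects $\phi_i$ patch. The hardest step is precisely this: showing that the non-elliptic local equation can be solved compatibly with the transition data, i.e. that the sole obstruction is the harmonic-$1$-form class annihilated by irreducibility. Granting that, the resulting pair $(\mathcal{G},F)$ is a monopole gerbe with $c_1(\mathcal{G})=\alpha$, proving surjectivity.
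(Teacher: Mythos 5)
Your setup coincides with the paper's: take the gerbe $\mathcal{G}$ with $c_1(\mathcal{G})=\alpha$ from the isomorphism, let $H$ be the harmonic representative, choose a good cover, take local primitives $F_\alpha'$ with $dF_\alpha'=H$, and try to correct them by exact $2$-forms so that the monopole equation holds. But there is a genuine gap exactly where you place the ``hardest step'': you never solve the local equation, you only say \emph{``Granting that\dots''}. The paper does solve it, and the resolution is the one move your proposal is missing. Instead of solving for the correction $\mu_\alpha$ alone (which, as you correctly observe, gives a non-elliptic problem), one generates \emph{both} the correction and the potential from a single $1$-form $b_\alpha$, setting
\begin{equation*}
(a_{\alpha}, \phi_{\alpha})=\bigl(\ast(db_{\alpha} \wedge \psi),\, -d^{\ast} b_{\alpha}\bigr),
\qquad F_\alpha = F_\alpha' + da_\alpha .
\end{equation*}
Using the identities $3d^7 \cdot = \ast(\ast(d \cdot \wedge \psi) \wedge \psi)$ and $3d^*d^7=d^*d$ on $1$-forms, the monopole equation $\ast(F_\alpha\wedge\psi)=d\phi_\alpha$ becomes the single elliptic equation $\Delta b_{\alpha} = -\ast(F_{\alpha}' \wedge \psi)$, where $\Delta$ is the Hodge Laplacian; Ricci-flatness of a $G_2$-holonomy metric gives $\Delta=\nabla^*\nabla$ on $1$-forms, and the resulting Dirichlet problem on each $U_\alpha$ is solved by minimizing a coercive convex functional (this is the second half of the proof of theorem \ref{th:MonopoleGerbe}, which the proof of theorem \ref{thm:2} reuses verbatim). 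Without this substitution, or some equivalent device, your argument does not close.

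Your second structural claim --- that the crux is a \v Cech compatibility obstruction living in the space of harmonic $1$-forms, killed by $b^1(X)=0$ and $\pi_7H=0$ --- is a red herring. Definition \ref{def:MonopoleGerbe} imposes only \emph{local} conditions: each pair $(F_\alpha,\phi_\alpha)$ must satisfy the equation on its own patch, and nothing requires the $\phi_\alpha$ to patch globally. The gerbe-connection compatibility is automatic on a good cover: since $dF_\alpha=dF_\beta=H$, each difference $F_{\alpha\beta}=F_\alpha-F_\beta$ is closed on the contractible $U_{\alpha\beta}$, hence exact, hence the curvature of a connection on the (trivializable) line bundle $L_{\alpha\beta}$, exactly as in proposition \ref{Connection}. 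In the paper's proof of theorem \ref{thm:2} neither $b^1(X)=0$ nor $\pi_7H=0$ is invoked; irreducibility is used elsewhere (in the first half of theorem \ref{th:MonopoleGerbe}, on the open set $U_0=X\setminus N$, where the B\"ochner argument kills $\pi_7(H_0)$). So the global Hodge-theoretic machinery you propose to deploy is not needed, and in any case it would not by itself produce the local solutions whose existence is the actual content of the theorem.
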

 
Putting theorem \ref{thm:2} together with \ref{thm:1} we see that, if the map \ref{eq:Map} is surjective, then the $G_2$-analogue of the Hodge conjecture holds true. However, as we said before it is not our goal to attempt an answer to that question.\\

On the other hand, a more practical goal with the tools at hand, is to obtain restrictions on the geometry/topology of possible coassociative submanifolds of certain classes. This is of interest has giving topological restrictions for the convergence of the mean curvature flow for a $4$ dimensional submanifolds of $G_2$-manifolds. One other possible place of interest is for example in understanding the global geometry of coassociative fibrations on a $G_2$ manifold. Our final result (which is not an application of the previous ones) is aligned with that direction, giving restrictions on the topological type of coassociatives depending on the cohomology class they represent. This follows from a coassociative analogue of the adjunction formula and the fact that on a compact, non-flat $G_2$-manifold $p_1(X) \cup [\varphi] >0$. Namely we prove that

\begin{theorem}\label{prop:3}
Let $(X, \varphi)$ be a compact $G_2$-manifold, $\alpha \in H^{3}(X, \mathbb{Z})$, and $N$ a coassociative representative of $\alpha$. If $\tau$ and $\chi$ respectively denote the signature and Euler characteristic of $N$, then $p_1(X) \cup \alpha = 6 \tau -2 \chi$. In particular,
\begin{itemize}
\item $\tau > \frac{1}{3} \chi$ if and only if $p_1(X) \cup \alpha >0$;
\item $\tau = \frac{1}{3} \chi$ if and only if $p_1(X) \cup \alpha =0$;
\item $\tau < \frac{1}{3} \chi$ if and only if $p_1(X) \cup \alpha <0$.
\end{itemize}
\end{theorem}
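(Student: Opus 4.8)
The plan is to compute $p_1(X) \cup \alpha$ by evaluating $p_1(X)$ against the coassociative representative $N$ and then relating the result to the topological invariants of $N$ itself via the normal bundle and a coassociative analogue of the adjunction formula. Since $N$ is Poincaré dual to $\alpha$, we have $p_1(X) \cup \alpha = \langle p_1(X)|_N, [N]\rangle = \int_N p_1(TX|_N)$. The splitting $TX|_N = TN \oplus \nu_N$ gives $p_1(TX|_N) = p_1(TN) + p_1(\nu_N)$, so the central task is to express $\int_N p_1(TN)$ and $\int_N p_1(\nu_N)$ in terms of the signature $\tau$ and Euler characteristic $\chi$ of $N$.

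\medskip

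First I would recall the key structural fact, due to McLean, that for a coassociative $N \subset X$ the normal bundle $\nu_N$ is naturally isomorphic to the bundle of self-dual $2$-forms $\Lambda^2_+ T^*N$. This is the linchpin: it converts the extrinsic invariant $p_1(\nu_N)$ into an intrinsic characteristic number of $N$. Next I would use the standard identities from four-manifold topology, namely that $\int_N e(TN) = \chi$ and $\int_N p_1(TN) = 3\tau$ by the Hirzebruch signature theorem, together with the computation of $p_1(\Lambda^2_+ T^*N)$. The self-dual two-form bundle of an oriented Riemannian $4$-manifold satisfies $p_1(\Lambda^2_+) = 2e(TN) + 3\tau$-type relations; concretely one finds $\int_N p_1(\Lambda^2_+ T^*N) = 3\tau - 2\chi$ after accounting for orientation and the relation between $p_1$, $e$, and the signature of the intersection form on self-dual classes. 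Combining, I expect $\int_N p_1(TX|_N) = 3\tau + (3\tau - 2\chi) = 6\tau - 2\chi$, which is precisely the claimed formula. The three bulleted dichotomies then follow immediately from the sign of $6\tau - 2\chi = 2(3\tau - \chi)$, which is positive, zero, or negative exactly when $\tau \gtrless \frac{1}{3}\chi$.

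\medskip

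I would then verify the orientation conventions carefully, since a sign error in identifying $\nu_N \cong \Lambda^2_+$ versus $\Lambda^2_-$ would flip the contribution and break the formula. The coassociative calibration condition $\psi|_N = dvol$ fixes an orientation on $N$, and one must check that the induced orientation on $\nu_N$ via the $G_2$-structure matches the self-dual convention used when computing $p_1(\Lambda^2_+)$. The main obstacle is precisely this bookkeeping: establishing the correct signs in the identification of the normal bundle with the self-dual forms and in the characteristic class computation for $\Lambda^2_\pm$, so that the two contributions add coherently rather than cancel. Once the McLean isomorphism and the orientation are pinned down, the rest is a direct application of Chern--Weil theory and the signature theorem.
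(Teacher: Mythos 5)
Your overall strategy coincides with the paper's: write $p_1(X)\cup\alpha=\int_N p_1(TX\vert_N)$, split $TX\vert_N=TN\oplus\nu_N$, get $\int_N p_1(TN)=3\tau$ from the signature theorem, and convert $p_1(\nu_N)$ into intrinsic invariants of $N$ by identifying $\nu_N$ with a bundle of (anti-)self-dual $2$-forms. However, the single numerical input your argument rests on is wrong as stated. For a closed oriented Riemannian $4$-manifold the correct relations are
\[
p_1\bigl(\Lambda^2_\pm T^*N\bigr)=p_1(TN)\pm 2e(TN),
\qquad\text{so}\qquad
\int_N p_1\bigl(\Lambda^2_\pm T^*N\bigr)=3\tau\pm 2\chi .
\]
The value $3\tau-2\chi$ you assign to $\Lambda^2_+$ is in fact the Pontryagin number of $\Lambda^2_-$. (Sanity check on a K\"ahler surface: $\Lambda^2_+=\mathbb{R}\omega\oplus K_{\mathbb{R}}$ with $K$ the canonical bundle, so for $\mathbb{CP}^2$ one gets $\int p_1(\Lambda^2_+)=c_1(K)^2=9=3\tau+2\chi$, whereas $3\tau-2\chi=-3$.)

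This is not a deferrable bookkeeping issue, because it changes the answer. If you keep McLean's identification $\nu_N\cong\Lambda^2_+T^*N$, the correct computation gives $p_1(X)\cup\alpha=3\tau+(3\tau+2\chi)=6\tau+2\chi$, and the theorem \emph{as stated} is false under that convention. The stated formula $6\tau-2\chi$ presupposes the paper's choice of model $3$-form (Remark \ref{rem:Signature0}), for which the normal bundle of a coassociative is $\Lambda^2_-N$ rather than $\Lambda^2_+N$; then $3\tau+(3\tau-2\chi)=6\tau-2\chi$ as claimed. The two conventions are reconciled by reversing the orientation of $N$, which flips both $\tau$ and $\alpha$. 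So your proposal reaches the stated answer only because two incompatible conventions (McLean's $\Lambda^2_+$ identification paired with the $\Lambda^2_-$ Pontryagin number) happen to cancel. You correctly flagged the $\Lambda^2_+$-versus-$\Lambda^2_-$ sign as the main obstacle, but resolving it \emph{is} the substance of the proof: the paper does so by writing the curvature of the induced connection on $\Lambda^2_-N$ explicitly in an orthonormal coframe and deriving $p_1(\Lambda^2_-N)=p_1(N)-2\chi$ by Chern--Weil, which pins the sign down unambiguously and keeps the identification and the characteristic-class computation within one consistent convention.
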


\begin{remark}\label{rem:Signature0}
In this result I have used the convention that my $3$-forms are modeled on
$$\varphi_0= e^{123} + e^1 \wedge (e^{45} -e^{67})+ e^2 \wedge (e^{46} -e^{75})+ e^3 \wedge (e^{47} -e^{56}),$$
on $\mathbb{R}^7$. In this way, $N=0 \times \mathbb{R}^4$ is coassociative and the map $e_i \mapsto \iota_{e_i} \varphi$ yields an isomorphism of $TN^{\perp}$ with $\Lambda^2_-N$ the anti-self-dual $2$-forms on $N$. For such irreducible $G_2$-manifolds locally modeled on these we have $p_1(X) \cup [\varphi]>0$ , see remark \ref{rem:Signature}. However, many authors use $G_2$-structures locally modeled on $\varphi_0= e^{123} + e^1 \wedge (e^{45} +e^{67})+ e^2 \wedge (e^{46} +e^{75})+ e^3 \wedge (e^{47} +e^{56})$, in which case the normal bundle of a coassociative submanifold $N$ is isomorphic to $\Lambda^2_+N$. For such structures, we have $p_1(X) \cup [\varphi] < 0$ on any irreducible $G_2$-manifold. Moreover, the result of theorem \ref{prop:3} should then be stated as $p_1(X) \cup \alpha = 6 \tau +2 \chi$ and so
\begin{itemize}
\item $\tau > -\frac{1}{3} \chi$ if and only if $p_1(X) \cup \alpha >0$;
\item $\tau = -\frac{1}{3} \chi$ if and only if $p_1(X) \cup \alpha =0$;
\item $\tau < -\frac{1}{3} \chi$ if and only if $p_1(X) \cup \alpha <0$.
\end{itemize}
In fact, these two statements are related by changing the orientation of the coassociative, in which case is calibrated by $- \psi$.
\end{remark}

An immediate consequence of this result is that given any irreducible $G_2$-manifold $(X,\varphi)$ and a compact $4$-manifold $N^4$, there are classes in $X$, such that $N$ cannot be embedded in $X$ as a coassociative representative of such classes. For example, given that a $4$-torus has $\tau=\frac{\chi}{3}=0$ we have

\begin{corollary}
Any coassociative $T^4$ of a compact $G_2$-manifold must represent a class $\alpha$ such that $p_1(X) \cup \alpha =0$. In particular, if $b^3(X) = 1$, then $(X, \varphi)$ has no coassociative tori.
\end{corollary}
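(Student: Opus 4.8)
The first assertion follows by substituting the topology of the four-torus into Theorem \ref{prop:3}. The plan is to recall that $\chi(T^4)=0$ and $\tau(T^4)=0$ (the latter because the intersection form on $H^2(T^4;\mathbb{R})\cong\mathbb{R}^6$ has $b_2^+=b_2^-=3$), as already noted in Remark \ref{rem:Signature0}. Feeding $\tau=\chi=0$ into the identity $p_1(X)\cup\alpha = 6\tau-2\chi$ supplied by Theorem \ref{prop:3} immediately yields $p_1(X)\cup\alpha=0$ for the class $\alpha$ carried by any coassociative $T^4$.

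For the ``in particular'' clause I would argue by contradiction. Suppose $b^3(X)=1$ and that $N$ is a coassociative torus representing $\alpha=\mathrm{PD}[N]\in H^3(X,\mathbb{Z})$. Since $\varphi$ is harmonic and nonzero, $[\varphi]$ is a nonzero element of the one-dimensional space $H^3(X,\mathbb{R})$, so $H^3(X,\mathbb{R})=\mathbb{R}[\varphi]$ and hence $\alpha=\lambda[\varphi]$ for some real $\lambda$. The sign of $\lambda$ is pinned down by the calibration condition: Poincaré duality gives
\[
\int_X \alpha\wedge\psi \;=\; \int_N \psi \;=\; \vol(N) \;>\;0,
\]
whereas $\int_X \varphi\wedge\psi = \int_X \varphi\wedge\ast\varphi = \|\varphi\|_{L^2}^2>0$, and comparing the two forces $\lambda>0$ (in particular $\alpha\neq 0$). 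Cup product with $\alpha=\lambda[\varphi]$ together with the first assertion then gives
\[
0 \;=\; p_1(X)\cup\alpha \;=\; \lambda\,\bigl(p_1(X)\cup[\varphi]\bigr),
\]
whence $p_1(X)\cup[\varphi]=0$. This contradicts the positivity $p_1(X)\cup[\varphi]>0$ recorded before Theorem \ref{prop:3}, so no coassociative $T^4$ can exist.

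The one step that genuinely needs care, and which I expect to be the main obstacle to a fully unconditional statement, is the availability of that positivity: $p_1(X)\cup[\varphi]>0$ holds only when $X$ is non-flat, since on a flat $G_2$-manifold $p_1(X)=0$ and the contradiction evaporates. The plan is to dispose of the flat case using the hypothesis $b^3(X)=1$ together with the fact that a compact flat $G_2$-manifold is a torus quotient (for the torus itself $b^3(T^7)=35\neq1$), or, more in keeping with the rest of the paper, to run the corollary under the standing assumption that $(X,\varphi)$ is irreducible, in which case non-flatness—and therefore the positivity—is automatic. Everything else is a direct substitution into Theorem \ref{prop:3} combined with the elementary calibration inequality above.
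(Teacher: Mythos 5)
Your proposal is correct and is essentially the argument the paper leaves implicit as an ``immediate consequence'': substitute $\tau(T^4)=\chi(T^4)=0$ into Theorem \ref{prop:3} to get $p_1(X)\cup\alpha=0$, then for $b^3(X)=1$ write $\alpha=\lambda[\varphi]$ with $\lambda>0$ forced by the calibration pairing $\int_X\alpha\wedge\psi=\vol(N)>0$, and contradict the positivity $p_1(X)\cup[\varphi]>0$ of Remark \ref{rem:Signature}. Your caveat about the flat case is a genuine (and worthwhile) refinement rather than a deviation: the positivity holds only for non-flat $X$, while the corollary as printed assumes only ``compact'', so your proposed fix --- assuming irreducibility, in line with the sentence preceding the corollary, or separately excluding flat quotients with $b^3=1$ --- is exactly what is needed to make the statement airtight.
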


We must however remark that up to the author's knowledge there are no known examples of $G_2$-manifolds with $b^3(X)=1$.\\

In the rest of this note we recall some basic notions of gerbes in section \ref{sec:Gerbes}. Namely we recall Hitchins working definition for gerbes and connections on them. Then, we recall Hitchin's construiction of a gerbe with connection associated with a codimension-$3$ submanifold. In section \ref{sec:MonopoleGerbes} we give the proofs of the results mentioned in this introduction. In section $4$ we give an illustrative example in the reducible case. Also, in that section we give some constructions of coassociative submanifolds of twisted connected sums.

\subsection*{Acknowledgments}

I would like to thank Robert Bryant, Simon Donaldson, Hans Joachim Hein, Spiro Karigiannis, Johannes Nordstr\"om and Mark Stern for discussions regarding the result in this short note. I also want to thank an anonymous referee for comments and the AMS and Simons foundation for travel support.

\section{Gerbes}\label{sec:Gerbes}

In the introduction we defined a \textbf{gerbe} as a C\^ech cocycle $\mathcal{G} \in \check{H}^2(X, C^{\infty}(S^1))$. Alternatively, \cite{Hitchin99} one can take an open cover $\lbrace U_{\alpha} \rbrace_{\alpha \in I}$ and a gerbe $\mathcal{G}$ can be defined by the following data
\begin{itemize}
\item A line bundle $L_{\alpha \beta}$ over each $U_{\alpha \beta}=U_{\alpha} \cap U_{\beta}$, and isomorphisms $L_{\beta \alpha} \cong L_{\alpha \beta}^{-1}$.

\item Trivializations $\theta_{\alpha \beta \gamma} : L_{\alpha \beta}L_{\beta \gamma} L_{\gamma \alpha} \cong \underline{\mathbb{C}}$, such that $\delta \theta =1$ on $U_{\alpha \beta \gamma \delta}$.
\end{itemize}
Using this point of view, one a \textbf{connection} $F$ on $\mathcal{G}$ is determined by the following data
\begin{itemize}
\item Connections $\nabla_{\alpha \beta}$ on the $L_{\alpha \beta}$, such that $\nabla_{\alpha \beta \gamma} \theta_{\alpha \beta \gamma}=0$. 

\item $2$-forms $F_{\alpha}$ on the $U_{\alpha}$, such that
$$F_{\alpha \beta}=F_{\alpha}-F_{\beta},$$
is the curvature of $\nabla_{\alpha \beta}$ on $L_{\alpha \beta}$.
\end{itemize}
Then, in the double intersections $U_{\alpha \beta}=U_{\alpha} \cap U_{\beta}$
$$dF_{\beta} = dF_{\alpha} + d F_{\alpha \beta}= dF_{\alpha},$$
by the Bianchi identity. Therefore, there is a well defined $3$-form $H$ such that
$$H \vert_{U_{\alpha}} = dF_{\alpha},$$
for all $\alpha \in I$. This $H$ is called the \textbf{curvature} of the gerbe connection.

\begin{remark}
If $(\mathcal{G},F)$ is a monopole gerbe, then it is easy to see that the connections $\nabla_{\alpha \beta}$ on the line bundles satisfy
$$F_{\alpha \beta} \wedge \psi = \ast d \phi_{\alpha \beta},$$
where $F_{\alpha \beta}=F_{\alpha}-F_{\beta}$ and $\phi_{\alpha \beta}= \phi_{\alpha}-\phi_{\beta}$. In other words $(\nabla_{\alpha \beta}, \phi_{\alpha \beta})$ form an Abelian monopole on $L_{\alpha \beta}$. This justifies our nomenclature.
\end{remark}

\subsection*{Codimension-$3$ submanifolds and gerbes}

In this section we shall consider a more general setup where $(X^n,g)$ is a real $n$-dimensional Riemannian manifold and $N$ a codimension $3$ (embedded) submanifold. The next proposition is an analogue of the construction of the map $Div \rightarrow Pic$ in complex geometry and I learned it for gerbes from Hitchin's paper \cite{Hitchin99}. For completeness, we shall include the construction.

\begin{proposition}\label{Connection}
Let $N$ be a codimension $3$, connected and embedded submanifold of $X$ and $H \in PD[N] \in H^3(X, \mathbb{Z})$. Then, there is a Gerbe with connection $(\mathcal{G}_{H}, F)$ whose curvature is $H$. In particular, $c_1(\mathcal{G}_{H}) = PD \left[ N \right]$.
\end{proposition}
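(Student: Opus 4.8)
The plan is to realize Hitchin's construction explicitly from the normal geometry of $N$ and then read off its curvature. First I would fix an auxiliary metric and use the exponential map to identify a tubular neighborhood $B$ of $N$ with a disk bundle in the normal bundle $\nu\to N$, a rank-$3$ oriented real bundle. Its unit sphere bundle $p\co S(\nu)\to N$ has $S^2$ fibres, and $B\setminus N$ deformation retracts onto $S(\nu)$. Because $\nu$ has odd rank its real Euler class vanishes, so there is a closed $2$-form $\Omega$ on $S(\nu)$ restricting to the area form of unit integral on each fibre; pulling it back to $B\setminus N$ and capping it off radially produces the Thom form, a closed $3$-form $H$ supported in $B$ with unit integral over each normal disk, so that $[H]=\mathrm{PD}[N]$ in de Rham cohomology. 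A useful preliminary observation is that $H|_{X\setminus N}$ is exact: in the long exact sequence of the pair $(X,X\setminus N)$ the class $\mathrm{PD}[N]$ comes from $H^3(X,X\setminus N)$, hence restricts to $0$ on $X\setminus N$; thus $H=dF_0$ there for some $F_0\in\Omega^2(X\setminus N)$.

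Next I would assemble the \v Cech data of the gerbe. Choose a cover $\{W_a\}$ of $N$ trivializing $\nu$, with transition maps $g_{ab}\co W_{ab}\to \SO(3)$, and let $\wtilde W_a\subset B$ be the corresponding disk-bundle patches, together with $U_0=X\setminus N$; these cover $X$. On $\wtilde W_a\cap U_0\cong W_a\times(D^3\setminus 0)$, which retracts onto $W_a\times S^2$, set $L_{0a}=\pi_a^\ast\mathcal{O}(1)$, the fibrewise Dirac monopole of degree $1$, where $\pi_a$ is the projection to $S^2$ determined by the chosen trivialization; on $\wtilde W_a\cap \wtilde W_b$ let $L_{ab}$ be the line bundle measuring the difference between the two identifications $\mathcal{O}(1)\cong g_{ab}^\ast\mathcal{O}(1)$ of the monopole bundle under the $\SO(3)$-action on $\C P^1$. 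The triple-overlap isomorphisms $\theta_{0ab}$ and $\theta_{abc}$ are the tautological comparisons of these identifications. For the connection I equip each $L_{0a}$ with the monopole connection $\nabla_{0a}$, whose curvature is $2\pi\,\pi_a^\ast\Omega$, and I take the global $2$-forms to be the $F_0$ above on $U_0$ and forms $F_a$ on $\wtilde W_a$ with $dF_a=H$, which exist because $\wtilde W_a$ retracts onto $W_a\subset N$ where $H$ restricts to the Euler form of $\nu$, vanishing in de Rham cohomology; these are arranged so that $F_0-F_a$ equals the curvature of $\nabla_{0a}$ on the overlap.

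With the data in place I would verify the two sets of axioms of Section~\ref{sec:Gerbes}: that $(L_{\alpha\beta},\theta_{\alpha\beta\gamma})$ is a gerbe, i.e.\ $\delta\theta=1$ on quadruple overlaps, and that $(\nabla_{\alpha\beta},F_\alpha)$ is a connection, i.e.\ $F_\alpha-F_\beta$ is the curvature of $\nabla_{\alpha\beta}$ and the connections are compatible with the $\theta$'s. By construction $dF_\alpha=H$ on every patch, so the global $3$-form given by $H|_{U_\alpha}=dF_\alpha$ is precisely the Thom form, and the curvature of $(\mathcal{G}_H,F)$ is $H$ with $[H]=\mathrm{PD}[N]$. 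Finally, to upgrade this to the integral statement $c_1(\mathcal{G}_H)=\mathrm{PD}[N]$ I would identify the Dixmier--Douady class of the explicit cocycle: localized near $N$ the construction is the pullback of the basic gerbe on the normal $S^2$, whose class generates $H^2(S^2,\Z)$, and under the Thom isomorphism $H^0(N)\cong H^3(X,X\setminus N)\to H^3(X,\Z)$ this generator maps to $\mathrm{PD}[N]$; since $c_1$ is the isomorphism induced by the exponential sequence, this pins down $c_1(\mathcal{G}_H)$ integrally.

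The main obstacle I expect is the gluing step: checking that the fibrewise monopole bundles $L_{0a}$ and the comparison bundles $L_{ab}$ fit into a consistent gerbe, i.e.\ that the $\theta$'s satisfy the cocycle condition $\delta\theta=1$. This is exactly where the non-triviality lives, because the $\SO(3)$-action on $\C P^1$ does not lift to a linear action on $\mathcal{O}(1)$ --- the failure to lift is what forces a gerbe rather than a line bundle, and it must be tracked carefully through the $\SO(3)$-cocycle $\{g_{ab}\}$. The second delicate point is passing from the de Rham statement $[H]=\mathrm{PD}[N]$ to the integral identity $c_1(\mathcal{G}_H)=\mathrm{PD}[N]$, since the curvature alone only determines $c_1$ modulo torsion; here the explicit cocycle and the integrality of the Thom class are essential.
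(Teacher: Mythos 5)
Your route is genuinely different from the paper's: you build everything from the normal-bundle geometry (global angular form, Thom representative, fibrewise monopole bundles), whereas the paper works globally with Hodge theory for currents, weakly solving $\Delta H_0 = H-\delta_N$ on $X$ and $\Delta H_\alpha = H$, $dH_\alpha=0$ on the patches, and then setting $F_0=d^*H_0$, $F_\alpha=d^*H_\alpha$. The first genuine gap is that, as written, your proposal proves a weaker statement than the one claimed. The proposition starts from an \emph{arbitrary} representative $H$ of $PD[N]$ and demands a gerbe connection whose curvature is that given form (this flexibility is exactly what the paper needs later, in theorem \ref{th:MonopoleGerbe}, where $H$ is the harmonic representative); you instead manufacture one particular representative, the Thom form --- call it $\Phi$ --- and realize it as a curvature. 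The repair is short but must be stated: write $H=\Phi+d\omega$ with $\omega\in\Omega^2(X)$ (possible since the two forms are cohomologous), and replace every $F_\alpha$ by $F_\alpha+\omega\vert_{U_\alpha}$; this leaves all differences $F_\alpha-F_\beta$, hence the line-bundle connections and compatibility data, untouched, and moves the curvature from $\Phi$ to $H$.

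The second gap is the step ``these are arranged so that $F_0-F_a$ equals the curvature of $\nabla_{0a}$,'' which is asserted rather than proved, and which fails for the $F_0$ you actually construct. You obtain $F_0$ from abstract exactness of $H\vert_{X\setminus N}$ (long exact sequence of the pair); such a primitive is ambiguous up to an arbitrary closed $2$-form on $X\setminus N$, so the period $\int_{S^2}(F_0-F_a)$ over a normal sphere can be an arbitrary real number, whereas being the curvature of a connection on the degree-one bundle $L_{0a}$ forces it to be exactly $\pm 1$. You need the specific primitive your own Thom construction provides, namely $F_0=\rho\,\pi^\ast\Omega$ with $\pi\co B\setminus N\to S(\nu)$ the radial projection, $\rho(0)=-1$ and $\rho\equiv 0$ near $\partial B$ (extended by zero outside $B$): then $\int_{S^2_r}F_0=\rho(r)$, so $\int_{S^2_r}(F_a-F_0)=\int_{D^3_r}H-\rho(r)\to 1$ as $r\to 0$. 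The paper gets this integrality for free from Stokes' theorem for currents: since $d(F_\alpha-F_0)=\delta_N$, one has $\int_{S^2}(F_\alpha-F_0)=\int_{D^3}\delta_N=1$. Finally, you defer the cocycle condition $\delta\theta=1$ (the $\SO(3)$-lifting issue) as ``the main obstacle'' without resolving it; note that the paper sidesteps this entirely by choosing the cover so that each $U_\alpha\cap N$ and $U_{\alpha\beta}$ is contractible, taking $L_{\alpha\beta}$ trivial for $\alpha,\beta\neq 0$, and fixing trivializations of $L_{0\alpha}\otimes L_{\beta 0}$ on triple overlaps --- a device you could adopt verbatim while keeping your connection forms.
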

\begin{proof}
To construct the gerbe take a finite open cover given by $U_0 = X \backslash N$ and $\lbrace U_{\alpha} \rbrace_{\alpha \in I}$, such that $N \subset \cup_{\alpha \in I} U_{\alpha}$ and each $U_{\alpha} \cap N$, $U_{\alpha} \cap U_{\beta}$ is contractible. We shall use the indices $\lbrace i, j, k \rbrace$ to refer to either $0$ or $\alpha \in I$. To define the Gerbe $\mathcal{G}_{H}$, one must give line bundles $L_{ij}$ on the double intersections $U_{ij} = U_i \cap U_j$ satisfying a cocycle condition on the triple intersections. Using the notation $L_{ij}=L_{ji}^{-1}$, this is given by fixing a trivialization $L_{ij} \otimes L_{jk} \otimes L_{ki} \cong \underline{\mathbb{C}}$ on the triple intersections $U_{ijk}= U_i \cap U_j \cap U_k$.\\
Notice that up to homotopy $U_{0 \alpha} \cong ( U_{\alpha}\cap N) \times \mathbb{S}^2$, then we let $L_{\alpha 0}$ be the pullback of Hopf bundle on the $\mathbb{S}^2$ factor. On $U_{\alpha \beta}$ we let $L_{\alpha \beta}$ be the trivial bundle. Then the cocycle condition is trivially satisfied on the the triple intersection $U_{0 \alpha \beta}$ by fixing a trivialization $\underline{\mathbb{C}} \cong L_{0\alpha} \otimes L_{\beta 0} \vert_{U_{0 \alpha \beta}}$.\\
We turn now to the definition of the connection. This requires giving the connection $2$-forms $F_i$ on each $U_i$, such that
$$F_{ij}= F_i - F_j,$$
is the curvature of a connection on $L_{ij}$. We weakly solve the PDE for currents
$$\Delta H_0 = H- \delta_{N}$$
which is possible, since $\left[ H- \delta_N \right]=0$ in de Rham cohomology for currents. Then $dH_0=0$, as $\Delta dH_0=d \Delta H_0=0$ being both exact and harmonic and so vanishes. Also notice that $H_0$ is only unique up to an harmonic $3$-form. It is also possible to solve
$$\Delta H_{\alpha} = H \ \ , \ \ dH_{\alpha}=0,$$
on each open set $U_{\alpha}$. Using the solutions $H_0, H_{\alpha}$ to these equations, one can define the connection $2$-forms by
\begin{eqnarray}\nonumber
F_0 & = & d^* H_0 \ , \ \text{on $U_0$} \\ \nonumber
F_{\alpha} & = & d^* H_{\alpha} \ , \ \text{on $U_{\alpha}$}.
\end{eqnarray}
Notice that $F_0$ is indeed uniquely defined as any other $H_0$ will differ by a global harmonic three form, which is then coclosed and give rise to the same $F_0$. One still needs to check that the $2$-forms $F_{ij}$ are the curvature of a connection on $L_{ij}$. For $F_{\alpha \beta}$ this is obvious as $dF_{\alpha \beta} = d(  F_{\alpha}-F_{\beta} )=0$ on $U_{\alpha \beta}$ and the Poincar\'e lemma gives a primitive to $F_{\alpha \beta}$ which we take to be our connection on $L_{\alpha \beta}$.\\
Over $U_{0\alpha}$ there is a unique nontrivial $2$ cycle, namely the one generated by the $2$-spheres $S^2$ in the normal bundle. These do bound a $3$-dimensional disk $D^3$ in $U_{\alpha}$ but not in $U_{0 \alpha}$, as any such $D^3$ does need to intersect $N$. Since $dF_{\alpha 0}= d(F_{\alpha}- F_{0} )= \delta_{N}$, Stokes' theorem gives
$$\int_{S^2}F_{0\alpha} = \int_{D^3} dF_{0 \alpha} = \int_{D^3} \delta_N =1.$$
This shows that the $2$-forms $F_i$ do define a connection on the gerbe $\mathcal{G}_{N}$. To check that its curvature is $H$, just compute $dF_0 = dd^* H_0 = H$ in $U_0$ and $dF_{\alpha} =d d^* H_{\alpha} = H$ in $U_{\alpha}$.
\end{proof}

Let $Y$ denote the disjoint union of a collection of open sets covering $X$. Then, the connections on the gerbes $\mathcal{G}_N$ above where defined using $2$-forms $F \in \Omega^2(Y)$ satisfying some compatibility conditions, also known as curvings \cite{Murray2010}. Given two open coverings $Y_1 = \lbrace U^1_{\alpha} \rbrace_{\alpha \in I}$ and $Y_2 = \lbrace U^2_{\beta} \rbrace_{\beta \in J}$, we can define a refined open cover
$$Y_{12}= \lbrace U^1_{\alpha} \cap U^2_{\beta} \rbrace_{(\alpha, \beta) \in I \times J}.$$
Then, we can add two curvings $F^1 \in \Omega^2(Y_1)$ and $F^2 \in \Omega^2(Y_2)$ to define a curving $F^{12} \in \Omega^2(Y_{12})$, such that
$$F^{12} \vert_{U^1_{\alpha} \cap U^2_{\beta}} = F^1 \vert_{U^1_{\alpha}} + F^2 \vert_{ U^2_{\beta}}.$$ 
Moreover, this can be taken inductively to define $Y_{1...k}$, for any $k$-tuple of open covers and curvings $F^{1...k} \in \Omega^2(Y_{1...k})$

\begin{definition}\label{Product}
Let $N= N_1 \cup ...  \cup N_k$ with each $N_i$ an embedded, connected submanifold, and for $i=1,...,k$ let $H_i \in PD[N_i]$ be $3$-forms representing the respective cohomology class. The previous proposition \ref{Connection} gives gerbes $(\mathcal{G}_{N_i}, F_i)$ with connections defined by curvings $F_i \in \Omega^2(Y_i)$ whose curvature is $H_i$. We define the gerbe with connection $(\mathcal{G}_N, F)$ to be given by $\mathcal{G}_N= \mathcal{G}_{N_1}...\mathcal{G}_{N_k}$ and the connection by the curving $F^{1...k} \in \Omega^2(Y_{1...k})$.
\end{definition}

In the definition above $\mathcal{G}_N= \mathcal{G}_{N_1}...\mathcal{G}_{N_k}$ denotes the tensor product of the gerbes. The resulting gerbe has first Chern class $c_1(\mathcal{G}_N)= c_1(\mathcal{G}_{N_1}) + ... + c_1(\mathcal{G}_{N_k}) = PD[N]$ and indeed $(\mathcal{G}_N, F)$ does have curvature $H=H_1 + ... + H_k$.

\begin{lemma}\label{ProductLemma}
In the setup of definition \ref{Product}, let all $N_i$'s be embedded and such that $N= N_1 \cup ... \cup N_k$ remains embedded. Then, given $H \in PD[N]$ the gerbe with connection $(\mathcal{G}_{N}, F)$ from proposition \ref{Connection} coincides with the one constructed via definition \ref{Product}, for any choice of $H_i \in PD[N_i]$, such that $H= \sum_{i=1}^k H_i$.
\end{lemma}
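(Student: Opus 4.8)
The plan is to show that the two gerbes with connection agree as elements of $MPic(X,\varphi)$ by exhibiting a single open cover on which their local data literally coincide. Both objects already share their topology and curvature: indeed $c_1(\mathcal{G}_N)=PD[N]=\sum_i PD[N_i]=\sum_i c_1(\mathcal{G}_{N_i})$, and, as noted after Definition \ref{Product}, the tensor-product connection has curvature $\sum_i H_i=H$, which is also the curvature produced by Proposition \ref{Connection} applied directly to $N$ with the form $H$. The substance of the lemma is therefore the matching of the full connection (curving plus line-bundle data), and for this I would exploit the freedom in the auxiliary solutions appearing in the proof of Proposition \ref{Connection}.

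First I would fix a convenient cover. Since $N=N_1\cup\cdots\cup N_k$ is embedded, the $N_i$ are disjoint closed submanifolds; choose $U_0=X\setminus N$ together with finitely many sets $\{U_\alpha\}$ covering $N$, each $U_\alpha$ meeting exactly one $N_{i(\alpha)}$ and with all multiple intersections contractible. I would then run Proposition \ref{Connection} for each $N_i$ using the complement $V^i_0=X\setminus N_i$ and those $U_\alpha$ meeting $N_i$; note $U_\alpha\subset V^i_0$ whenever $U_\alpha\cap N_i=\varnothing$, and $U_0=\bigcap_i V^i_0$. Passing to the common refinement $Y_{1\cdots k}$ of these covers, the effective sets near $N$ are precisely the $U_\alpha$ and the effective complement set is $U_0$, so the tensor-product gerbe of Definition \ref{Product} is presented on the same cover $\{U_0,U_\alpha\}$ as the direct construction. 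On this cover each $\mathcal{G}_{N_i}$ contributes a pulled-back Hopf bundle on $U_{0\alpha}$ only when $U_\alpha$ meets $N_i$ and a trivial bundle otherwise; hence in the tensor product only the single factor with $U_\alpha\cap N_{i(\alpha)}\neq\varnothing$ is nontrivial, so the underlying topological gerbe, together with its triple-intersection trivializations, is canonically identified with the Hopf gerbe of $N$ built directly.

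The crux is the connection. Writing $H_{i,0}$ for the global current solving $\Delta H_{i,0}=H_i-\delta_{N_i}$ and $H_{i,\alpha}$ for the local solutions of $\Delta H_{i,\alpha}=H_i$, $dH_{i,\alpha}=0$, I would take as the auxiliary data for the direct construction on $N$
\[
\tilde H_0=\sum_{i}H_{i,0}\quad\text{on }U_0,\qquad \tilde H_\alpha=H_{i(\alpha),\alpha}+\sum_{j\neq i(\alpha)}H_{j,0}\quad\text{on }U_\alpha.
\]
Because $\delta_N=\sum_i\delta_{N_i}$ as currents, one checks $\Delta\tilde H_0=H-\delta_N$ globally, while on $U_\alpha$ the currents $\delta_{N_j}$ with $j\neq i(\alpha)$ vanish, so $\Delta\tilde H_\alpha=H$ and $d\tilde H_\alpha=0$; thus $\tilde H_0,\tilde H_\alpha$ are admissible choices in Proposition \ref{Connection}. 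The resulting curving forms $F_0=d^*\tilde H_0=\sum_i d^*H_{i,0}$ and $F_\alpha=d^*\tilde H_\alpha=d^*H_{i(\alpha),\alpha}+\sum_{j\neq i(\alpha)}d^*H_{j,0}$ are, by the additivity of curvings recalled before Definition \ref{Product}, exactly the tensor-product curving $F^{1\cdots k}$. Hence the two gerbes with connection have identical data on $\{U_0,U_\alpha\}$.

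Finally I would argue that this particular choice represents the (choice-independent) object of Proposition \ref{Connection}, so that the coincidence holds for any decomposition $H=\sum_i H_i$: on $U_0$ the curving $F_0=d^*H_0$ is unchanged by the harmonic ambiguity in $H_0$ (harmonic forms being coclosed), exactly as in the proof of Proposition \ref{Connection}, while on each contractible $U_\alpha$ any two admissible solutions differ by a closed, hence exact, $2$-form in $F_\alpha$, which is a Deligne coboundary absorbed into the line-bundle connections and leaves the isomorphism class fixed. I expect the main obstacle to be precisely this last bookkeeping: checking that the topological identification established above is compatible with the connections $\nabla_{ij}$ on the $L_{ij}$ and the trivializations $\theta$ on triple intersections — that is, that it is an isomorphism of gerbes \emph{with connection} and not merely an equality of curvings — together with confirming that the common-refinement construction of curvings does restrict on each $U_\alpha$ to the sum displayed above.
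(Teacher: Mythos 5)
Your proposal is correct and takes essentially the same approach as the paper: both pass to a common refinement in which each $U_\alpha$ meets a single $N_{i(\alpha)}$, take the summed auxiliary solutions $\tilde H_0=\sum_i H_{i,0}$ and $\tilde H_\alpha=H_{i(\alpha),\alpha}+\sum_{j\neq i(\alpha)}H_{j,0}$, and recognize the resulting curvings as admissible data for Proposition \ref{Connection} applied directly to $(N,H)$. The only (harmless) difference is bookkeeping: the paper checks independence of the splitting explicitly, by replacing $(H_1,H_2)$ with $(H_1+d\omega,\,H_2-d\omega)$ and noting that the induced $\pm d^*\alpha$ corrections cancel in the summed curvings, whereas you deduce it from the choice-independence (up to isomorphism) of the direct construction itself.
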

\begin{proof}
For simplicity we shall only do the case $k=2$, the gerbes in question are defined via line bundles on the double intersections of the open cover given by the sets $U_{0}^{1} = X\backslash N_1$, $U_{0}^{2}=X\backslash N_2$ and $\lbrace U_{\alpha}^{1} \rbrace_{\alpha \in I_1}$, $ \lbrace U_{\alpha}^{2} \rbrace_{\alpha \in I_2}$ such that $N_i \subset \cup_{\alpha \in I_i} U^i_{ \alpha}$, for $i=1,2$ and we suppose the $U^1_{\alpha}$'s are disjoint from the $U^2_{\alpha}$'s. As before, let $U_0 = X\backslash N$ and weakly solve the PDE's for the following $3$-forms (currents)
$$\Delta H_{\alpha}^i = H_i \ \ , \ \ dH_{\alpha}^{i}=0,$$
on each $U^i_{ \alpha}$ and
$$\Delta H_{0}^{i} = H_{i}- \delta_{P_i}, $$
on $U_{0}^{i}$ for $i=1,2$. Then, the $2$-forms defining the connection are given by
\begin{eqnarray}\nonumber
F_0 & = &  d^* (H_{0}^{1}+H_{0}^{2}) \ , \ \text{on $U_0$} \\ \nonumber
F_{\alpha}^{1} & = & d^* (H_{\alpha}^{1} + H_{0}^{2}) \ , \ \text{on $U_{\alpha}^{1}$} \\ \nonumber
F_{\alpha}^{2} & = & d^*( H_{\alpha}^{2} + H_{0}^{1}) \ , \ \text{on $U_{\alpha}^{2}$}.
\end{eqnarray}
It follows that $dF_0= H_1 + H_2$ on $U_0$ and for $i=1,2$ one has $dF_{\alpha}^{i}= H_1 + H_2 $ on each $U_{\alpha}^{i}$, which shows that the curvature of the connection on $\mathcal{G}_{N_1} \otimes \mathcal{G}_{N_2}$ is $H=H_1 +H_2$. To check that the connection does not depend on the splitting $H=H_1 + H_2$ take instead the splitting given by the $3$-forms $H_1 + d \omega$ and $H_2-d \omega$ for some $\omega \in \Omega^2(X)$. These are obviously cohomologous to the initial ones and do add to $H$. The forms $H^i_{0}, H^i_{\alpha}$ change by $+ \alpha, - \alpha$, for $i=1,2$ respectively, where $\alpha$ satisfies
$$dd^* \alpha = d \omega \ \ , \ \ d\alpha =0.$$
So the forms $F^i_{0}= d^* H^i_{0}$ and $F^i_{\alpha} = d^*H^i_{\alpha}$ change by $\pm d^* \alpha $ and so $F_0 = F_{0}^1 + F_0^2$ do not change. Exactly the same argument works for the $F_{\alpha}$'s and we conclude that the connection remains unchanged.
\end{proof}

\section{Proof of the main results}\label{sec:MonopoleGerbes}

We now go back to the case when $(X, \varphi)$ is a $G_2$-manifold and denote by $\psi= \ast \varphi$ the calibrating $4$-form. The $3$-forms in a $G_2$ manifold, pointwise split into $G_2$-irreducible representations as
$$\Lambda^3 = \Lambda^3_1 \oplus \Lambda^3_7 \oplus \Lambda^3_{27},$$
where the subscripts in the right hand side denote the respective dimension. We shall use $\pi_1,\pi_7,\pi_{27}$ to denote the respective projections.\\
 Recall from definition \ref{def:MonopoleGerbe} that a gerbe with connection $(\mathcal{G}, F)$ is said to be a monopole gerbe if its curvature $H \in \Omega^3(X, \mathbb{R})$ is the harmonic representative of $c_1(\mathcal{G}) \in H^3(X, \mathbb{Z})$ and there is a trivialization $\lbrace U_{\alpha} \rbrace_{\alpha \in I}$ and $2$-forms $F_{\alpha}$ satisfying
$$\ast (F_{\alpha} \wedge \psi ) = d \phi_{\alpha},$$
where the $\phi_{\alpha}$'s are real valued functions.\\
It is now easy to see that under the tensor product of the gerbes and addition of the curvings defining the connections, the monopole gerbes define an Abelian group which we have denoted by $MPic(X, \varphi)$.
We shall now see that associated to a coassociative submanifold there is a canonical monopole gerbe.

\subsection{Proof of theorem \ref{thm:1}}

Before we dive into the proof let us prove an easy but key lemmata.

\begin{lemma}\label{lem:Coassociative}
Let $N$ be a coassociative submanifold and $\delta_N$ the current it generates, then $\delta_N \wedge \varphi=0$.
\end{lemma}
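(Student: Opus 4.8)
The plan is to reduce the claimed current identity to the pointwise linear-algebraic fact that the $G_2$ $3$-form $\varphi$ restricts to zero on every coassociative $4$-plane. First I would fix degree conventions: since $N$ has codimension $3$ in $X^7$, the current $\delta_N$ it generates has degree $3$ — it represents $PD[N]\in H^3(X,\mathbb{Z})$, exactly as it is used in Proposition \ref{Connection} — so $\delta_N\wedge\varphi$ is a degree-$6$ current, determined by its pairing against $1$-forms. For a test form $\beta\in\Omega^1(X)$ I would unwind the definition of the wedge of a current with a smooth form,
$$\langle \delta_N\wedge\varphi,\beta\rangle = \langle \delta_N, \varphi\wedge\beta\rangle = \int_N \iota_N^*(\varphi\wedge\beta),$$
where $\iota_N\colon N\hookrightarrow X$ is the inclusion and any overall sign is irrelevant for the vanishing. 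Since pullback is an algebra homomorphism, $\iota_N^*(\varphi\wedge\beta) = (\iota_N^*\varphi)\wedge(\iota_N^*\beta)$.

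The crux is to show $\iota_N^*\varphi = \varphi|_N = 0$. This is the standard Harvey--Lawson characterization of coassociative submanifolds: a $4$-plane is calibrated by $\psi=\ast\varphi$ precisely when $\varphi$ restricts to zero on it. Using the model form of Remark \ref{rem:Signature0}, every monomial of $\varphi_0$ contains at least one of $e^1,e^2,e^3$, so $\varphi_0$ restricts to $0$ on the model coassociative plane $\{0\}\times\mathbb{R}^4$; since each tangent space $T_xN$ is $G_2$-conjugate to this model plane, $\varphi|_N=0$ at every point of $N$. I would either cite this characterization or include the one-line verification just sketched.

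Combining the two steps, $\iota_N^*(\varphi\wedge\beta)=0$ for every $\beta\in\Omega^1(X)$, hence $\langle \delta_N\wedge\varphi,\beta\rangle=0$ for all test $1$-forms, which is exactly the assertion $\delta_N\wedge\varphi=0$. The only genuine content is the vanishing $\varphi|_N=0$; everything else is formal manipulation of currents, so I do not expect a real obstacle beyond being careful that wedging a current with a smooth form is well defined and that restriction to $N$ commutes with the wedge product.
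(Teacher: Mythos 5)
Your proposal is correct and follows essentially the same route as the paper: the paper's proof likewise pairs $\delta_N\wedge\varphi$ against a test $1$-form and invokes the equivalent characterization $\varphi|_N=0$ of coassociative submanifolds. Your extra verification of $\varphi|_N=0$ on the model plane is fine but not needed beyond citing that standard fact, which the paper takes as the definition-equivalent.
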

\begin{proof}
An equivalent way to define a coassociative submanifold is to say that $\varphi \vert_N =0$. Then for all $\eta \in \Omega^1(X)$,
$$\delta_N \wedge \varphi (\eta) = \int_{N} \eta \wedge \varphi =0.$$
\end{proof}

\begin{lemma}\label{lem:2}
Let $(F, \phi)$ be a $2$-form and a function on a contractibe open set $U$ of a $G_2$-manifold. Then, if $\pi_7 dF=0$ and $F \wedge \psi = \ast d\phi$, we have $F=d^*G$ where $G$ is a closed $3$-form.
\end{lemma}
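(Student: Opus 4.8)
The plan is to reduce everything to the single fact that $F$ is coclosed, $d^* F = 0$, and then to manufacture the \emph{closed} potential $G$ by a Hodge–theoretic argument on the contractible set $U$. Throughout I use the pointwise splitting $\Lambda^2 = \Lambda^2_7 \oplus \Lambda^2_{14}$, writing $F = F_7 + F_{14}$, together with the standard $G_2$ eigen-identities $\ast(F_7\wedge\varphi)=2F_7$ and $\ast(F_{14}\wedge\varphi)=-F_{14}$ (equivalently, since $\ast\ast=\id$ in dimension $7$, $F_7\wedge\varphi=2\ast F_7$ and $F_{14}\wedge\varphi=-\ast F_{14}$), the characterization $F_{14}\wedge\psi=0$, and the fact that $\beta\mapsto\ast(\beta\wedge\psi)$ restricts to an isomorphism $\Lambda^2_7\cong\Lambda^1$. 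I also use that $d\varphi=d\psi=0$.

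First I would analyze the hypothesis $F\wedge\psi=\ast d\phi$. Since $F_{14}\wedge\psi=0$, this reads $F_7\wedge\psi=\ast d\phi$, so $F_7$ is exactly the element of $\Lambda^2_7$ corresponding to $d\phi$ under the isomorphism above. On the other hand, $\ast(\phi\varphi)=\phi\psi$ gives $d^*(\phi\varphi)=-\ast d\ast(\phi\varphi)=-\ast(d\phi\wedge\psi)$, and $\ast(\alpha\wedge\psi)$ is a nonzero multiple of the $\Lambda^2_7$-element associated with the $1$-form $\alpha$. Hence $d^*(\phi\varphi)$ is a nonzero constant multiple of $F_7$, and therefore $d^* F_7 = 0$ because $(d^*)^2=0$. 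This is the first key point: the first hypothesis forces $F_7$ to be coexact, hence coclosed.

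Next I would feed $d^* F_7=0$ into the eigen-identity for $F_7$. Differentiating $F_7\wedge\varphi=2\ast F_7$ and applying $\ast$ gives $\ast(dF_7\wedge\varphi)=2\,\ast d\ast F_7=2\,d^* F_7=0$. Since on $3$-forms the map $\eta\mapsto\ast(\eta\wedge\varphi)$ kills $\Lambda^3_1\oplus\Lambda^3_{27}$ and is a nonzero multiple of the $\Lambda^3_7$-component, this says $\pi_7 dF_7=0$. Combined with the second hypothesis $\pi_7 dF=0$ this yields $\pi_7 dF_{14}=0$. Running the same manoeuvre on the other eigen-identity, I differentiate $F_{14}\wedge\varphi=-\ast F_{14}$ and apply $\ast$ to get $\ast(dF_{14}\wedge\varphi)=-d^* F_{14}$; the left-hand side is a multiple of $\pi_7 dF_{14}=0$, so $d^* F_{14}=0$, and adding the two pieces gives $d^* F=0$. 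The only delicate point in this block is the representation-theoretic bookkeeping, namely verifying that the equivariant maps $\ast(\,\cdot\wedge\varphi)$ and $\ast(\,\cdot\wedge\psi)$ are nonzero on the relevant summands (Schur's lemma, irreducibility of the $G_2$-pieces), which can be confirmed once and for all on the flat model $\varphi_0$.

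Finally, with $d^* F=0$ in hand, I produce the closed potential on $U$. Because $d^* F=\ast\,d\ast F$ vanishes, $d\ast F=0$, so the $5$-form $\ast F$ is closed and the Poincaré lemma gives a $4$-form $\Gamma$ with $d\Gamma=\ast F$; modifying $\Gamma$ by an exact form I may in addition arrange $d^*\Gamma=0$, which amounts to solving an auxiliary Laplace-type equation on $U$ exactly as in Proposition \ref{Connection}. Setting $G:=-\ast\Gamma$, a $3$-form, one computes $d^* G=-\ast d\ast G=\ast\,d\Gamma=\ast\ast F=F$, while $d^*\Gamma=0$ gives $d\ast\Gamma=\ast d^*\Gamma=0$ and hence $dG=0$; thus $G$ is closed and $F=d^* G$. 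The main obstacle is precisely this last step: passing from ``$F$ coclosed'' to ``$F=d^*$ of a \emph{closed} form'' is not purely cohomological and relies on the elliptic solvability producing a coclosed primitive $\Gamma$, whereas everything preceding it is formal $G_2$ algebra.
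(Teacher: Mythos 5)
Your proof is correct, and it takes a genuinely different route from the paper's in both halves. For coclosedness, the paper writes $F=\ast(f\wedge\psi)+g$ with $g\in\Lambda^2_{14}$, extracts $f=d\phi/3$ from the monopole equation, and then invokes Bryant's structure-equation tables to compute $dF$, concluding $\pi_7\,dg=0$ and hence that $F$ is coclosed; you instead note that the monopole equation forces $F_7=-\tfrac{1}{3}d^*(\phi\varphi)$, so $F_7$ is coexact and $d^*F_7=0$ for free, and then use the eigenvalue identities $\ast(F_7\wedge\varphi)=2F_7$ and $\ast(F_{14}\wedge\varphi)=-F_{14}$ to convert the hypothesis $\pi_7\,dF=0$ into $d^*F_{14}=0$. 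This is self-contained (no external tables) and isolates a pleasant fact the paper never states explicitly: the $\Lambda^2_7$-part of any solution of the monopole equation is automatically coexact. For the closed potential, the two arguments diverge more sharply. The paper globalizes: it extends a local potential to the compact manifold $X$ by a cutoff, applies the global Hodge decomposition $G'=da_2+d^*a_4+a_3$, and takes $G=da_2\vert_U$, so that compact Hodge theory does all the analytic work and $G$ is manifestly exact. You stay local: you take a Poincar\'e primitive $\Gamma$ of $\ast F$, correct it to be coclosed, and set $G=-\ast\Gamma$. The algebra there ($d^*G=F$, $dG=0$) is right, but be aware that ``arrange $d^*\Gamma=0$ by adding an exact form'' is a genuine boundary-value problem (solve $d^*d\beta=-d^*\Gamma$ on $U$, or equivalently produce a coexact primitive via a Hodge--Morrey type decomposition of $\overline{U}$), which tacitly requires $U$ to have compact closure and reasonable boundary --- hypotheses not in the statement of the lemma. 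Your appeal to Proposition \ref{Connection} is fair, since the paper asserts exactly this kind of local solvability there without proof (coexact primitives $F_\alpha=d^*H_\alpha$ of closed forms on the $U_\alpha$); so your route leans on boundary-value elliptic theory where the paper's leans on compactness of $X$, and each glosses over a comparable technical point (the paper's cutoff extension step, as written, has its own small glitch).
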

\begin{proof}
We write $F= \ast (f \wedge \psi) + g$ for some $1$-form $f$ and $g \in \Lambda^2_{14}$. Then, as $F\wedge \psi=\ast d\phi$, we conclude that $f= \frac{d \phi}{3}$ and using table $3$ in \cite{Bryant2006} we compute $dF = -\frac{1}{7} \Delta (\frac{\phi}{3}) \varphi + \pi_{7} dg + \pi_{27} dg$. Moreover, as $\pi_7(dF)=0$ by assumption, we conclude that $\pi_7 dg=0$.\\
Now, we compute $d\ast F =\ast (\pi_7 dg \wedge \varphi)=0$ and so $F$ is coclosed. As $U$ is contractible we can write $F=d^* G''$ on $U$ and extend $G''$ to a $3$-form $G'$ on $X$, for example by multiplying $G''$ by a bump function supported on a slightly larger open set $U'\supset U$ and letting it vanish on its complement. Then, by de Rham's theorem $G'=da_2 + d^*a_4 + a_3$ for some $a_i \in \Omega^i$ with $a_3$ harmonic. We now redefine $G=da_2 \vert_U$, which is therefore closed (in fact exact) and on $U$ 
$$d^*G=d^* d a_2 = d^*G'=d^* G''=F,$$
as we wanted to show.
\end{proof}

\begin{theorem}\label{th:MonopoleGerbe}
Let $(X, \varphi)$ have holonomy strictly equal to $G_2$, $N$ be a connected, embedded, coassociative submanifold. Then, the gerbe with connection associated with $N$ is a monopole gerbe. 
\end{theorem}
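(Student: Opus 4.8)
The plan is to check, for the pair $(\mathcal{G}_N,F)$ produced by Proposition \ref{Connection} (and Definition \ref{Product}), the two defining conditions of a monopole gerbe. The curvature condition is free: I would take $H\in PD[N]$ to be the \emph{harmonic} representative, and then Proposition \ref{Connection} outputs a connection whose curvature is exactly this harmonic $H$, so the second bullet of Definition \ref{def:MonopoleGerbe} holds by construction. All the content is in the first (monopole) bullet, and by Lemma \ref{ProductLemma} it is enough to treat connected $N$.

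Recall that the connection $2$-forms are $F_\alpha=d^*H_\alpha$ on $U_\alpha$ and $F_0=d^*H_0$ on $U_0=X\setminus N$, where $\Delta H_\alpha=H$, $\Delta H_0=H-\delta_N$, and $dH_\alpha=dH_0=0$. Two facts drive everything. First, $d^*F_i=d^*d^*H_i=0$, so each $F_i$ is coclosed. Second, since $dH_i=0$ we have $dF_i=\Delta H_i$, whence $\Delta F_i=d^*\Delta H_i$; this equals $d^*H=0$ on $U_\alpha$ and $-d^*\delta_N$ on $U_0$, so $F_i$ is harmonic on the smooth locus away from $N$.

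The crucial step, and the one that uses the hypotheses, is $\pi_7(dF_i)=0$. On $U_\alpha$ this quantity is $\pi_7(H)$, and on $U_0$ it is $\pi_7(H)-\pi_7(\delta_N)$ at the level of currents. Now $\pi_7H=0$ because the holonomy is strictly $G_2$: then $b^1(X)=0$, so there are no harmonic $3$-forms of type $\Lambda^3_7$ (these correspond to harmonic $1$-forms), and as $\pi_7$ commutes with $\Delta$ for a torsion-free structure the harmonic form $H$ satisfies $\pi_7H=0$ pointwise. On the other hand $\pi_7\delta_N=0$ is precisely Lemma \ref{lem:Coassociative}: the component $\pi_7$ of a $3$-form is detected by wedging with $\varphi$, and coassociativity gives $\delta_N\wedge\varphi=0$. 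Hence $\pi_7(dF_i)=0$ in every case, and this is exactly where the coassociativity of $N$ is consumed.

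It remains to pass from $d^*F_i=0$ and $\pi_7(dF_i)=0$ to the monopole equation. Writing $F_i=\ast(f_i\wedge\psi)+g_i$ with $g_i\in\Lambda^2_{14}$ as in Lemma \ref{lem:2}, one has $\ast(F_i\wedge\psi)=3f_i$, so $\ast(F_i\wedge\psi)=d\phi_i$ is equivalent to $f_i$ being exact; after refining the trivializing cover to contractible sets it suffices to prove $f_i$ closed, and a natural candidate potential is $\ast(H_i\wedge\psi)=\langle H_i,\varphi\rangle$, whose differential recovers the $\Lambda^3_1$-part of the picture. Using the structure equations of a torsion-free $G_2$-structure (Bryant's table $3$ in \cite{Bryant2006}), coclosedness together with $\pi_7(dF_i)=0$ forces $\pi_7(df_i)=0$, i.e. $df_i\in\Lambda^2_{14}$; the plan is then to use the harmonicity $\Delta F_i=0$ from the second paragraph to kill this remaining $\Lambda^2_{14}$-component and conclude $df_i=0$. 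I expect this last step to be the main obstacle: the purely algebraic/first-order $G_2$-identities only annihilate the $\Lambda^2_7$-part of $df_i$, and closing the gap seems to require feeding in the second-order information $\Delta H_i=H$ together with careful bookkeeping of the $G_2$-components across $N$, rather than a naive Hodge-theoretic vanishing argument, which is unavailable on the open sets $U_i$.
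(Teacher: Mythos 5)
Your opening moves are sound and parallel the paper's: take $H$ to be the harmonic representative, note $\pi_7H=0$ from full holonomy, and note $\pi_7\delta_N=0$ from Lemma \ref{lem:Coassociative}. But the proof never reaches the monopole equation, and the route you sketch for the final step is not just difficult --- it is based on a false implication. The conditions you collect for the connection forms, namely $d^*F_i=0$, $\pi_7(dF_i)=0$, and harmonicity away from $N$, do \emph{not} imply that $\ast(F_i\wedge\psi)$ is closed. In the flat model with the paper's convention for $\varphi_0$ (Remark \ref{rem:Signature0}), the $2$-form $F=x_3\,e^{24}+x_2\,e^{34}=d(x_2x_3\,e^4)$ is closed, coclosed, and has harmonic coefficients, yet $\ast(F\wedge\psi)=x_3\,e^6+x_2\,e^7$, whose differential $e^{36}+e^{27}$ is nonzero. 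Worse, since $F$ is coclosed on a ball, the second half of the proof of Lemma \ref{lem:2} (which only uses coclosedness) produces a closed $3$-form $G$ with $d^*G=F$ and $\Delta G=dd^*G=dF=0$; consequently, if $F_\alpha=d^*H_\alpha$ is one admissible connection form from Proposition \ref{Connection} on $U_\alpha$, then $F_\alpha+F$ is another (replace $H_\alpha$ by $H_\alpha+G$). The local potentials $H_\alpha$ are unique only up to closed harmonic $3$-forms, so ``the'' connection forms you are trying to analyze are not well defined, and a generic admissible choice violates the monopole equation. No argument that works with an arbitrary output of Proposition \ref{Connection} on the $U_\alpha$ can succeed; the theorem must be proved by making good choices.

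This is exactly how the paper proceeds, and it treats the two kinds of open sets asymmetrically, which your unified plan misses. On $U_0=X\setminus N$ the potential $H_0$ solves the \emph{global} current equation $\Delta H_0=H-\delta_N$ on $X$, so its ambiguity is a global harmonic (hence coclosed) form and $F_0=d^*H_0$ is canonical; since $\Delta$ preserves the type decomposition and $\pi_7(H-\delta_N)=0$, the component $\pi_7(H_0)$ is a globally defined harmonic $\Lambda^3_7$-form, killed by the B\"ochner formula and full holonomy. This vanishing of the \emph{potential's} $\Lambda^3_7$-part is strictly stronger than your $\pi_7(dF_0)=0$, and it is what powers the computation: writing $\pi_1(H_0)=-a\varphi$ and using $dH_0=0$, the paper computes $F_0=\pi_{14}d^*\pi_{27}H_0+\tfrac{7}{3}\ast(da\wedge\psi)$ and hence $\ast(F_0\wedge\psi)=d(7a)$, producing the Higgs field $\phi_0=7a$ explicitly (close to your candidate $\ast(H_0\wedge\psi)$, but you cannot justify it without $\pi_7(H_0)=0$). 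On the $U_\alpha$ there is no global leverage at all, and the paper does not attempt to verify the equation for a given $F_\alpha$: it picks any $F_\alpha'$ with $dF_\alpha'=H$, solves the Dirichlet problem $\nabla^*\nabla b_\alpha=-\ast(F_\alpha'\wedge\psi)$ with zero boundary values (Ricci-flatness identifying $\Delta$ with $\nabla^*\nabla$ on $1$-forms), and sets $F_\alpha=F_\alpha'+d\ast(db_\alpha\wedge\psi)$, $\phi_\alpha=-d^*b_\alpha$, so that the monopole equation holds by construction; Lemma \ref{lem:2} then certifies these $F_\alpha$ are of the form $d^*G$ with $G$ closed, i.e.\ legitimate connection forms for the gerbe of Proposition \ref{Connection}. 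So the gap you flag at the end of your proposal is not a matter of ``careful bookkeeping'': the intermediate claim is false, and the fix requires restructuring the argument to exploit the freedom in the construction.
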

\begin{proof}
Let $H$ be the harmonic representative of $PD[N]$ and recall the construction of the connection $F$ on $\mathcal{G}_N$. On the open cover $\lbrace U_{i} \rbrace_{i \in \lbrace 0 \rbrace \cup I}$ it is given by a collection of $2$-forms $F_i$. These are defined such that each $F_i = d^* H_i$, with $H_i$ a collection of $3$-forms such that $\Delta H_0=H- \delta_N$ and $\Delta H_{\alpha} = H$, $dH_{\alpha}=0$ for $\alpha \in I$.\\
We shall first work on the open set $U_0$. It follows from the fact that $H$ is the harmonic representative of $PD[N]$ that it has no component in $\Lambda^3_7$. Moreover, lemma \ref{lem:Coassociative} guarantees that $\delta_N$ also has no component in $\Lambda^3_7$. Hence, as on a $G_2$-manifold the Laplacian preserves the type decomposition and $\Delta H_0 = H- \delta_{N}$, it follows that $\pi_7(H_0)$ is harmonic. As $X$ is supposed to have full $G_2$ holonomy, there can be no parallel $1$-forms, \cite{Bryant1989}, and so the B\"ochner-formula implies that $\pi_7(H_0)=0$. Using this the equation $dH_0=0$ turns into $d \pi_1(H_0) = - d \pi_{27}(H_0)$ and if one writes $\pi_1 (H_0) = -a \varphi$, for some function $a$, this is
\begin{equation}\label{dH27}
d\pi_{27}(H_0)=  da \wedge \varphi.
\end{equation}
Then we compute $d^* \pi_{27}H_0= \pi_7 d^* \pi_{27} H_0 + \pi_{14}d^* \pi_{27} H_0$, using that $\pi_7 d^* \pi_{27} H_0 = - \frac{1}{3} \ast (\ast ( \ast d \pi_{27} H_0 \wedge \varphi ) \wedge \psi )$ together with equation \ref{dH27} gives
\begin{eqnarray}\nonumber
d^* H_0 & = & \pi_{14}d^* \pi_{27} H_0 -  \frac{1}{3} \ast (\ast ( \ast d \pi_{27}H_0 \wedge \varphi ) \wedge \psi ) - d^* (a \varphi) \\ \nonumber
& = &  \pi_{14}d^* \pi_{27} H_0 +\frac{4}{3} \ast \left( d a \wedge \psi \right)  + \ast ( d a \wedge\psi ) \\ \nonumber
& = & \pi_{14}d^* \pi_{27} H_0 + \frac{7}{3}  \ast \left( d a \wedge \psi \right) ,
\end{eqnarray}
where we used that $\ast (\ast (d a \wedge \varphi) \wedge \varphi) = -4 d a$. Now we put $F_0=d^* H_0$, $\phi = 7a$ and compute $\ast (F_0 \wedge \psi )$. Since $\Omega^2_{14}$ is the kernel of wedging with $\psi$ and $\ast ( \ast (d \phi \wedge \psi ) \wedge \psi) = 3 d \phi$, we obtain
\begin{equation}
\ast ( F_0 \wedge \psi ) = d \phi.
\end{equation}


We now need to define the connection $2$-forms for our monopole gerbe on the remaining $U_{\alpha}$'s. Since $H$ is closed we can locally find on the $U_{\alpha}$'s $2$-forms $F_{\alpha}'$ such that $dF_{\alpha}'=H$. Restricting to each $U_{\alpha}$ we shall seek a connection $2$-form $F_{\alpha}=F_{\alpha}' + da_{\alpha}$ such that the monopole equation $\ast (F_{\alpha} \wedge \psi) = d \phi_{\alpha}$ holds on $U_{\alpha}$ for some $\phi_{\alpha}$.\\
To do this we set $(a_{\alpha}, \phi_{\alpha})=(\ast(db_{\alpha} \wedge \psi), -d^{\ast} b_{\alpha})$ and solve for $b_{\alpha}$ instead. Using $3d^7 \cdot = \ast(\ast(d \cdot \wedge \psi) \wedge \psi)$ and that $3d^*d^7=d^* d$, the monopole equation turns into
$$-\ast(F_{\alpha}' \wedge \psi) = 3d^*d^7 b_{\alpha} + dd^* b_{\alpha} = \Delta b_{\alpha}.$$
Moreover, as $g$ is Ricci flat, on $1$-forms $\Delta = \nabla^* \nabla$ and so we need to solve $\nabla^* \nabla b_{\alpha}=- \ast (F_{\alpha} \wedge \psi)$. This can be done by solving the Dirichelet problem
\begin{eqnarray}\nonumber
\nabla^* \nabla b_{\alpha} & = & - \ast (F_{\alpha}' \wedge \psi), \text{ on $U_{\alpha}$} \\ \nonumber
b_{\alpha} \vert_{\partial U} & = & 0 , \text{ on $\partial U_{\alpha}$}.
\end{eqnarray}
This follows from minimizing the functional $J(u)= \int_U \vert \nabla u \vert^2+\langle u , f \rangle$, where $f=\ast (F_{\alpha}' \wedge \psi)$. To prove it is coercive we proceed as follows
\begin{eqnarray}\nonumber
J(u) & = & \int_U \vert \nabla u \vert^2 + \langle u , f \rangle \\ \nonumber
& \geq & \int_U \vert \nabla \vert u \vert \vert^2 - \frac{\epsilon}{2} \int_U \vert u \vert^2 - \frac{1}{2 \epsilon} \int_U \vert f \vert^2 \\ \nonumber
& \geq & \left( c_U - \frac{\epsilon}{2}  \right) \int_U \vert u \vert^2 -\frac{1}{2 \epsilon} \int_U \vert f \vert^2.
\end{eqnarray}
where $c_U>0$ is some constant and $\epsilon>0$ is to be chosen small enough to ensure the first term is positive. We also remark that in the computation above, the first inequality follows from Kato's and Young's inequalities, while the second one makes use of Poincar\'e's inequality, as $u$ has vanishing boundary values. This shows that the functional $J$ is convex and so there is a unique solution to the Dirichlet problem above. From this procedure we back to set $F_{\alpha}=F_{\alpha}'+d a_{\alpha}$ and $(a_{\alpha},\phi_{\alpha})=(\ast(db_{\alpha} \wedge \psi), -d^{\ast} b_{\alpha})$, then by construction $(F_{\alpha}, \phi_{\alpha})$ satisfy the monopole equation, and it is immediate that $dF_{\alpha}=H$. Moreover, it follows from lemma \ref{lem:2} that the connection $2$-forms constructed in this way agree with the one from the construction in proposition \ref{Connection}.

To check that this collection of connection $2$-forms give a trivialization of the claimed monopole gerbe connection one needs to check that each $F_{0\alpha}=F-F_{\alpha}$ is indeed the curvature of a connection on the bundles $L_{0\alpha}$. This is exactly the same as in standard construction in proposition \ref{Connection} and so we omit it. 
\end{proof}

In fact, as remarked to the author by an anonymous referee, the second part of the previous proof also proves that given a harmonic $3$-form $H$ on a compact, irreducible $G_2$-manifold, then there is a monopole gerbe whose curvature is $H$. The author, does not know whether a gerbe with harmonic curvature, on a compact, irreducible $G_2$-manifold can be written as a monopole gerbe without changing the connection (up to gauge). If true, this is analogous to the fact that in an irreducible Calabi-Yau manifold of complex dimension greater than $2$, any line bundle with harmonic curvature is holomorphic. A slightly more general version of this "integrability" theorem is stated as problem \ref{conj:integrability}.\\
The second part of the previous proof is, in fact also the key for proving theorem \ref{thm:2}

\begin{proof}
We prove that the map $MPic(X, \varphi) \rightarrow H^3(X, \mathbb{Z})$, $\mathcal{G} \mapsto c_1(\mathcal{G})$ is surjective. To do this we start with a gerbe $\mathcal{G}$ and construct a connection $F$ satisfying the required conditions. Let $H \in c_1(\mathcal{G})$ be the harmonic representative and $\lbrace U_{\alpha} \rbrace_{\alpha \in I}$ be a good open cover of $X$, i.e. each $U_{\alpha}$ and $U_{\alpha \beta}$ is contractible.\\
Then, following the last step in the proof of theorem \ref{th:MonopoleGerbe} we let $F_{\alpha}'$ be such that $dF_{\alpha}'=H$. Then, correct each of these to $F_{\alpha}=F_{\alpha}' + d a_{\alpha}$, with $(a_{\alpha}, \phi_{\alpha})=(\ast(db_{\alpha} \wedge \psi), -d^{\ast} b_{\alpha})$, such that $F_{\alpha} \wedge \psi = \ast d\phi_{\alpha}$. As before, this holds if and only if $\nabla^* \nabla b_{\alpha}=- \ast (F_{\alpha} \wedge \psi)$ which can be solved by standard minimization techniques as in the previous proof.
\end{proof}

\begin{remark}
We point out that theorem \ref{thm:2} and its proof show that any gerbe with harmonic curvature can be tensored with a flat gerbe so that the resulting gerbe with connection is a monopole gerbe.\\
We also point out that through Hitchin's construction the gerbe associated with any codimension-$3$ manifold has harmonic curvature. However, only in the case when this submanifold is coassociative does Hitchin's construction yield a monopole gerbe.
\end{remark}

\subsection*{Proof of theorem \ref{prop:3}}

Let $p_1(X)$ denote the first Pontryagin class on the tangent bundle of $X$. If $N \subset X$ is a coassociative submanifold, then $TX \vert_N \cong TN \oplus \Lambda^2_-(N)$ and so $p_1(X)\vert_N = p_1(N) + p_1(\Lambda^2_-(N))$. Moreover, if $N$ represents a class $\alpha \in H^3(X, \mathbb{Z})$, we have
\begin{equation}\label{eq:Inequality}
\langle p_1(X) \cup \alpha , [X] \rangle = \int_N p_1(N) + \int_N p_1(\Lambda^2_-(N)).
\end{equation}
The first term is $3 \tau$ and we shall now compute the second. Fix a local orthonormal framing $\lbrace e_0, e_1,e_2,e_3  \rbrace$. Then in this framing, the curvature of the Levi-Civita connection on $N$ acts via the matrix
$$ R=\begin{pmatrix}
0 & \Omega^0_1 & \Omega^0_2 & \Omega^0_3 \\
- \Omega^0_1 & 0 & \Omega^1_2 & \Omega^1_3 \\
-\Omega^0_2 & - \Omega^1_2 & 0 & \Omega^2_3  \\
-\Omega^0_3 & - \Omega^1_3 & - \Omega^2_3 & 0 
\end{pmatrix} \in \Omega^2(N, \mathfrak{so}(TN)) ,$$
where $\Omega^i_j(X,Y)=e^i(R(X,Y) e_j)$. Using this, the Gauss-Bonnet formula and Chern-Weil theory give
\begin{eqnarray}\nonumber
\chi & = & \frac{1}{2^4 \pi^2 2!} \sum_{ijkl} \epsilon_{ijkl} \Omega^i_j \wedge \Omega^k_l = \frac{1}{2^3 \pi^2 } \sum_{ijk} \epsilon_{ijk} \Omega^0_i \wedge \Omega^j_k \\ \nonumber
& = & \frac{1}{4 \pi^2} \left( \Omega^0_1 \wedge \Omega^2_3 + \Omega^0_2 \wedge \Omega^3_1 + \Omega^0_3 \wedge \Omega^1_2 \right). \\ \nonumber
p_1(N) & = & - \frac{1}{8 \pi^2} \tr (R \wedge R) \\ \nonumber
& = & \frac{1}{4 \pi^2} \left( \Omega^0_1 \wedge \Omega^0_1 + \Omega^0_2 \wedge \Omega^0_2 + \Omega^0_3 \wedge \Omega^0_3 +  \Omega^1_2 \wedge \Omega^1_2 + \Omega^1_3 \wedge \Omega^1_3 + \Omega^2_3 \wedge \Omega^2_3 \right).
\end{eqnarray}
Now let $\omega_1=e_{01}-e_{23}$, $\omega_2=e_{02}-e_{31}$, $\omega_3=e_{03}-e_{12}$ be a local basis of $\Lambda^2_-$. The curvature of the induced connection on $\Lambda^2_-$ acts on this basis by
$$ R_{\Lambda^2_-}=\begin{pmatrix}
0 & -(\Omega^0_3 - \Omega^1_2) & \Omega^0_2 + \Omega^1_3  \\
\Omega^0_3-\Omega^1_2 & 0 & -(\Omega^0_1-\Omega^2_3)  \\
-(\Omega^0_2 + \Omega^1_3) & \Omega^0_1-\Omega^2_3 & 0 
\end{pmatrix} \in \Omega^2(N, \mathfrak{so}(\Lambda^2_-)) ,$$
Then, we compute
\begin{eqnarray}\nonumber
p_1(\Lambda^2_-N) & = & - \frac{1}{8 \pi^2} \tr (R_{\Lambda^2_-} \wedge R_{\Lambda^2_-}) \\ \nonumber
& = & \frac{1}{4 \pi^2} \left( \Omega^0_1 \wedge \Omega^0_1 + \Omega^0_2 \wedge \Omega^0_2 + \Omega^0_3 \wedge \Omega^0_3 +  \Omega^1_2 \wedge \Omega^1_2 + \Omega^1_3 \wedge \Omega^1_3 + \Omega^2_3 \wedge \Omega^2_3 \right) \\ \nonumber
& & - \frac{1}{2 \pi^2} \left( \Omega^0_1 \wedge \Omega^2_3 + \Omega^0_2 \wedge \Omega^3_1 + \Omega^0_3 \wedge \Omega^1_2 \right)\\ \nonumber
& = & p_1(N)-2 \chi.
\end{eqnarray}
Hence, inserting this into equality \ref{eq:Inequality} and using the signature theorem we have
\begin{equation}\label{eq:Inequality2}
\int_X p_1(X) \cup \alpha = \langle (2p_1(N)-2 \chi ) , [N] \rangle = 6 \tau - 2 \chi
\end{equation}
as we wanted to prove.

\begin{remark}\label{rem:Signature}
\begin{enumerate}
\item Using our conventions for the $3$-form $\varphi$, see remark \ref{rem:Signature0}, we have $p_1(X) \cup [\varphi] >0$ for any compact nonflat $G_2$-manifold. To see this we notice that $p_1(X) \cup \varphi = - \frac{1}{8 \pi^2} \tr (F_R \wedge F_R) \wedge \varphi$. Moreover, as $X$ has holonomy $G_2$, $F_R$ takes values in $\Lambda^2_{14} \cong \mathfrak{g}_2$ by the Ambrose-Singer theorem. Then, using our conventions we have $F_R \wedge \varphi = \ast F_R$, so that
$$\langle p_1(X) \cup \varphi , [X] \rangle = - \int_X \frac{1}{8 \pi^2} \tr (F_R \wedge \ast F_R) = \Vert F_R \Vert^2_{L^2} >0,$$
as $(X, \varphi)$ is not flat.\\
Using the other convention for $\varphi$ we have $F_R \wedge \varphi = -\ast F_R$ and so this sign gets reversed. In any case, there do exist classes $\alpha^+, \alpha^- \in H^3(X, \mathbb{R})$ such that $p_1(X) \cup \alpha^+ >0$ and $p_1(X) \cup \alpha^- <0$.

\item In \cite{McLean1998}, corollary $4-3$ McLean observed that as the torus $\mathbb{T}^7$ has trivial tangent bundle, $p_1(\mathbb{T}^7)=0$ and any coassociative submanifold of $\mathbb{T}^7$ must have $\tau= \chi/3$ (there is a typo in the statement). For instance $N=\mathbb{T}^4 \times 0 \subset \mathbb{T}^7$ has $\tau=0=\chi$ and satisfies such equality.

\item On a $G_2$-manifold there is one other class of very interesting submanifolds known as associatives. These are defined by requiring that the restriction of $\varphi$ to them agrees with the volume form of the induced metric. Then, the following even easier argument shows that there are no associative submanifolds $M$ of a compact, irreducible $G_2$-manifold $(X, \varphi)$ representing the class $p_1(X)$. Otherwise we would have $vol(M)=\int_M \varphi = \int_X p_1(X) \cup [\varphi] < 0$, which is clearly impossible.
\end{enumerate}
\end{remark}

\section{Toy examples}

In this section we explore the construction above in a case when the holonomy representation is actually reducible and in order to adapt the definition of monopole gerbe to this case we require that $\pi_7(H)=0$, where $H$ is the harmonic curvature. Let $X= S^1 \times M^6$ with $(M, \omega , \Omega)$ being an irreducible Calabi-Yau with K\"ahler form $\omega$ and holomorphic volume form $\Omega=\Omega_1 + i \Omega_2$. In this case the $G_2$-structure is
$$\varphi = d \theta \wedge \omega - \Omega_1, \ \psi = -d \theta \wedge \Omega_2 + \frac{\omega^2}{2},$$
where $\theta$ is a periodic coordinate on $\mathbb{S}^1$. From, the last of these formulas it is easy to identify two types of coassociative submanifolds of $S^1 \times M$. Namely, those calibrated by either $-d\theta \wedge \Omega_2$, or $\frac{\omega^2}{2}$. These are of the form $S^1 \times SL^3$ and ${pt.} \times \mathcal{D}^4$, where $SL$, $\mathcal{D}$ and $pt.$ are respectively a special Lagrangian submanifold of $M$, a divisor in $M$ and a point in $S^1$.\\
In order to analyze some features of our construction in this case we need to explain how to get a complex line bundle $\pi_* \mathcal{G}$ over $M$ from a gerbe $\mathcal{G}$ over $S^1 \times M$. This is most naturally seen by regarding $\mathcal{G}$ as a line bundle over the loop space $L(M \times S^1)$ as in \cite{Hitchin99} and chapter 6 in \cite{Brylinski2009}. Each element of $L(M \times S^1)$ is a map $\gamma: S^1 \rightarrow M \times S^1$. As $S^1$ is $1$-dimensional, the pulled back gerbe $\gamma^* \mathcal{G}$ has a flat trivialization given by a flat line bundle $L_{\gamma} \rightarrow S^1$, and trivializations whose difference is a flat bundle with trivial holonomy are regarded as equivalent. Then, the moduli space of flat connections on $S^1$, i.e. $H^1(S^1,\mathbb{Z}) = S^1$ acts on these trivializations by tensoring with a flat connection. This shows that the space
$$P= \lbrace (\gamma, L_{\gamma}) \ \vert \ \text{$\gamma \in L(M\times S^1)$ and $L_{\gamma}$ is a trivialization of $\gamma^* \mathcal{G}$} \rbrace,$$
together with the $S^1$ action described above is a circle bundle over $L(M\times S^1)$. Now we consider the map
\begin{eqnarray}
M \xrightarrow{\Gamma} L(S^1 \times M), \ p \mapsto \gamma_p,
\end{eqnarray}
where $\gamma_p : S^1 \rightarrow S^1 \times M$ is the loop $\gamma_p(\theta)=(\theta, p)$.

\begin{definition}
We define $\pi_*\mathcal{G}$ to be the complex line bundle associated with $\Gamma^* P$ over $M$.
\end{definition}

\begin{proposition}
Let $D^4 \subset M$ be a $4$-dimensional submanifold and $\mathcal{G}$ be the gerbe associated with $pt. \times D^4 \subset S^1 \times M$. If $\mathcal{G}$ is a monopole gerbe, then $D$ is a divisor. In particular $\pi_*\mathcal{G}$ comes equipped with an HYM connection.
\end{proposition}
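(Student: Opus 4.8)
The plan is to convert the single defining feature of a monopole gerbe available in this reducible setting, namely the vanishing of the $\Lambda^3_7$-component of the harmonic curvature, into a Hodge-type statement about the class $PD_M[D]\in H^2(M)$, and then read off both conclusions.

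First I would pin down the harmonic curvature. By the Künneth formula the Poincaré dual in $X=S^1\times M$ of $pt.\times D$ is $PD_X[pt.\times D]=[d\theta]\wedge PD_M[D]$, where $[d\theta]$ generates $H^1(S^1)$. Since the metric is a Riemannian product and $d\theta$ is harmonic on $S^1$, the Hodge Laplacian respects the wedge decomposition, so the harmonic representative is $H=d\theta\wedge h$, where $h\in\Omega^2(M)$ is the harmonic representative of $PD_M[D]$. As the gerbe is assumed to be a monopole gerbe, the adapted definition forces $\pi_7(H)=0$.

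The computational heart is to show $\pi_7(H)=0$ if and only if $h$ is of type $(1,1)$. I would use the branching of the $G_2$-decomposition under the holonomy subgroup $SU(3)$: writing $h=c\,\omega+h_6+h_8$ according to $\Lambda^2(M)=\Lambda^2_1\oplus\Lambda^2_6\oplus\Lambda^2_8$, with $\Lambda^2_6=\{(2,0)+(0,2)\}$ and $\Lambda^2_1\oplus\Lambda^2_8=\Lambda^{1,1}$. Using $\Lambda^3_7=\{\iota_X\psi:X\in TX\}$ and $\psi=-d\theta\wedge\Omega_2+\tfrac{1}{2}\omega^2$, one computes $\iota_{\partial_\theta}\psi=-\Omega_2$ and $\iota_v\psi=d\theta\wedge\iota_v\Omega_2+(\iota_v\omega)\wedge\omega$ for $v$ tangent to $M$. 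Pairing $H=d\theta\wedge h$ against these generators, the $\Omega_2$-term and the $(\iota_v\omega)\wedge\omega$-term drop out (no $d\theta$), leaving $\langle H,\iota_v\psi\rangle=\langle h,\iota_v\Omega_2\rangle=\langle h_6,\iota_v\Omega_2\rangle$. Because contracting the $(3,0)$-form $\Omega$ with $(1,0)$-vectors spans $\Lambda^{2,0}$, the forms $\iota_v\Omega_2$ sweep out all of $\Lambda^2_6$ as $v$ ranges over $TM$; hence $\pi_7(H)=0$ exactly when $h_6=0$, i.e. $h\in\Lambda^{1,1}(M)$.

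With $h$ harmonic of type $(1,1)$, the class $PD_M[D]$ is a Hodge $(1,1)$-class, and since $M$ is Kähler the Lefschetz theorem on $(1,1)$-classes yields that $[D]$ is represented by a divisor, which is the first assertion. For the second, I would invoke the transgression identity $c_1(\pi_*\mathcal{G})=\int_{S^1}c_1(\mathcal{G})$, so that $c_1(\pi_*\mathcal{G})=[h]=PD_M[D]$, and the connection induced on $\pi_*\mathcal{G}$ has curvature $\int_{S^1}H=h$. Writing $h=c\,\omega+h_8$ with $h_8$ primitive, one has $F^{0,2}=0$ (type $(1,1)$) and $\Lambda_\omega h=3c$ constant, which is precisely the Hermitian--Yang--Mills condition; hence $\pi_*\mathcal{G}$ carries an HYM connection.

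The main obstacle I anticipate is the $SU(3)$-branching bookkeeping in the middle step: confirming that the $(2,0)+(0,2)$-part of $h$ is exactly the obstruction detected by $\pi_7$, and that $\{\iota_v\Omega_2\}$ exhausts $\Lambda^2_6$. A secondary point requiring care is justifying the transgression identity $c_1(\pi_*\mathcal{G})=\int_{S^1}c_1(\mathcal{G})$ together with the fact that the induced connection's curvature is literally the fibre integral $h$, and interpreting ``divisor'' at the level of cohomology classes via Lefschetz rather than as a literal calibrated submanifold.
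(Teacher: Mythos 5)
There is a genuine gap, and it lies in what you take ``monopole gerbe'' to mean and, consequently, in what you end up proving. Your argument uses only two pieces of data: that the curvature $H$ is the harmonic representative of $PD[pt.\times D]$, and that $\pi_7(H)=0$. Both of these depend only on the cohomology class $[D]$, not on the submanifold $D$ itself, and in fact both hold automatically: since $M$ is an irreducible Calabi--Yau, $H^{2,0}(M)=0$, so the harmonic representative $h$ of $PD_M[D]$ has harmonic $(2,0)$-part equal to zero, i.e.\ $h$ is of type $(1,1)$ for \emph{every} $4$-dimensional submanifold $D$ --- and by your own (correct) branching computation this already gives $\pi_7(d\theta\wedge h)=0$. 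So the conditions you invoke carry no information about $D$, and your conclusion --- that the class $[D]$ is a Hodge $(1,1)$-class, hence representable by \emph{some} divisor --- is true unconditionally and is strictly weaker than the proposition. The proposition asserts that $D$ itself is a divisor, i.e.\ that the integration current $\delta_D$ is of type $(1,1)$, equivalently that $D$ is a complex hypersurface; your final caveat about ``interpreting divisor at the level of cohomology classes'' is precisely the point at which the statement gets trivialized rather than proved. What your proof never uses is the first condition in the definition of a monopole gerbe: the existence of local connection $2$-forms $F_i$ and functions $\phi_i$ with $\ast(F_i\wedge\psi)=d\phi_i$. That local equation, on the open set $U_0=X\setminus N$, sees the current $\delta_N$ itself, and this is where all the content lives.

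For comparison, the paper's proof runs through exactly this channel. One solves $\Delta H_0 = H-\delta_N$ on currents, writes $H_0=g(\theta)+d\theta\wedge f(\theta)$ in the product splitting, shows $g$ is constant and $d_M f=0$, and fiber-integrates to obtain
\begin{equation*}
\Delta f_0 = h_0 - \delta_D, \qquad d_M f_0 = 0, \qquad f_0=\int_{S^1} f .
\end{equation*}
Then the monopole equation $d^*H_0\wedge\psi=\ast\, d\phi$ is pushed down by the same fiber integration to yield $d_M^* f_0 = I\, d_M\Phi$ with $\Phi=\int_{S^1}\phi$, whence $\Delta f_0 = 2i\,\partial\bar\partial\Phi$ is a $(1,1)$-current. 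Since $h_0$ is harmonic and therefore $(1,1)$, the identity $\delta_D = h_0-\Delta f_0$ exhibits $\delta_D$ as a $(1,1)$-current, which is the actual (pointwise, not cohomological) statement that $D$ is a divisor; the HYM claim for $\pi_*\mathcal{G}$ then follows because its curvature is the harmonic $(1,1)$-form $h_0$. To repair your proof you would need to import this step: extract from the local monopole equations an identity constraining $\delta_D$ itself, which cannot be done while working only with harmonic representatives.
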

\begin{proof}
Let $H$ be the harmonic representative of $N=pt. \times D$. As the Poincar\'e dual of $N$ is $[d\theta] \cup PD[D]$, this is of the form $H = d \theta \wedge h$ where $h \in \Omega^2(M, \mathbb{Z})$ is the harmonic representative of $PD[D]$. Similarly $\delta_M = \delta(\theta-pt) d \theta \wedge \delta_D$ and we solve the equation
\begin{equation}\label{eq:ProductEq}
\Delta H_0= (h- \delta_D \delta(\theta-pt)) \wedge d \theta,
\end{equation}
which implies $dH_0=0$. Using the splitting $\Lambda^3 X = \Lambda^3 M \oplus \Lambda^1 S^1 \otimes \Lambda^2 M$ we write $H_0=g(\theta) + d\theta \wedge f(\theta)$ and compute
\begin{eqnarray}\label{eq:fIsClosed}
d_M f & = & \frac{\partial g}{\partial \theta} \\  \nonumber
d_M g & = & 0 \\ \label{eq:ProductEq2}
\Delta H_0 & = & d\theta \wedge \left( \Delta_M f - \frac{\partial^2 f}{\partial \theta^2} \right) +  \Delta_M g - \frac{\partial^2 g}{\partial \theta^2} 
\end{eqnarray}
So the equation $\Lambda^3 M$ component of equation \ref{eq:ProductEq} turns into $ \Delta_M g - \frac{\partial^2 g}{\partial \theta^2} =0$. Separation of variables plus periodicity in $\theta$ then imply that $g$ must be constant and $g_M$-harmonic, and equation \ref{eq:fIsClosed} turns into $d_M f=0$.\\
Now define the map $\pi_* : \Omega^{*+1}(M \times S^1) \rightarrow \Omega^*(M)$, given by $\pi_* (\omega)=0$ and $\pi_*(d\theta \wedge \omega)= \int_{S^1} d\theta \wedge \omega$, for $\omega \in \Lambda^{k} M$. Let $h_0=\pi_* H_0$, then applying $\pi_*$ to equations \ref{eq:fIsClosed} and \ref{eq:ProductEq2} leads to
$$\Delta f_0 = h_0 - \delta_D, \ \ d_M f_0=0,$$
where $f_0 = \int_{S^1} f \in \Omega^2(M)$. So in order to prove that $D$ is diviso, i.e. the current $\delta_D$ is of type $(1,1)$ we just need to show that $\Delta f_0$ is of type $(1,1)$, as $h_0$ certainly is as it is harmonic and $H^{2,0}(M)=0$ for any irreducible Calabi-Yau. Note that such statement does not follow neither from the K\"ahler identities, neither from the $\partial \overline{\partial}$-lemma, as we do not know the type decomposition of $f_0$ neither whether it is exact. However, if $\mathcal{G}$ is a monopole gerbe, then $d^* H_0 \wedge \psi= \ast d \phi$ where $\phi$ is the function such that $\pi_1(H_0)= \phi (d\theta \wedge \omega - \Omega_1)$. Then, this equation yields
$$d\theta \wedge \left( \frac{\partial f}{\partial \theta} \wedge \Omega_2 - d_M^* f \wedge \frac{\omega^2}{2} \right) - \frac{\partial f}{\partial \theta} \wedge \frac{\omega^2}{2}=\ast_M \frac{\partial \phi}{\partial \theta}-d\theta \wedge \ast_M d_M \phi,$$
and applying $\pi_*$ to it gives $d_M^* f_0 \wedge \frac{\omega^2}{2} = - \ast_M D_M \Phi$, where $\Phi= \int_{S^1} \phi$. This equation is easily seen to be equivalent to $d_M^* f_0 = I d_M \Phi$ and so
$$\Delta f_0 = 2 i \partial \overline{\partial} \Phi,$$
which is clearly of type $(1,1)$ and so $D$ is a divisor.
\end{proof}

\section{Some Problems and Questions}

We now state some directions and conjectures for further research along these lines. The first of these goes back to Hitchin's work on the moduli of special Lagrangian submanifolds, \cite{Hitchin99}.

\begin{enumerate}
\item The map $MPic(X, \varphi) \rightarrow H^3(X, \mathbb{Z})$, from theorem \ref{thm:2} has a kernel which can be used to define an analogue of the Jacobian
$$
\begin{array}{ccc}
& CDiv(X, \varphi) & \\
& m\downarrow & \\
Jac(X, \varphi) \rightarrow & MPic(X, \varphi) & \xrightarrow{c_1} H^3(X, \mathbb{Z}) 
\end{array},
$$
In this way $MJac(X, \varphi) \subseteq H^2(X, \mathbb{Z})$ and this can be used to define linear equivalence of coassociatives. Namely, two coassociatives $N_1$ and $N_2$ are linearly equivalent if and only if the holonomies of the flat monopole gerbe associated with $N_1-N_2$ vanish.
\item Is there something like a "section" of a gerbe (the answer seems to be no).
\item Given a $G_2$-manifold $(X, \varphi)$, we can say it is polarizable (or prequantizable) if $[\varphi] \in H^3(X, \mathbb{Z})$, i.e. is integral. In this case, by fixing an good open cover $\lbrace U_i \rbrace_{i \in I}$, we can define a monopole gerbe $(\mathcal{G},F)$ with curvature $H$. This is somewhat similar in spirit to a polarized complex manifold, or a prequantizable sympletic manifold. It remains to see what can we do with this $G_2$-version.
\item In the case of a complex manifold and a complex line bundle $L$, the standard integrability theorem guarantees that if $A$ is a connection on $L$ whose curvature has no $(0,2)$ component, then the bundle is holomorphic, i.e. there are trivializations where the $(0,1)$-components of the connection forms vanish. We conjecture that the following analogue of this works for closed $G_2$-structures.

\begin{problem}\label{conj:integrability}
Let $U \subset \mathbb{R}^7$ be a contractible precompact set, equipped with a closed $G_2$-structure $\varphi$ and $H$ be a closed $3$-form on $U$ with $H_7=0$. Is there a $2$-form $F$ and a function $\phi$, such that $H=dF$ and $F \wedge \psi = \ast d \phi$?
\end{problem}
\end{enumerate}

\begin{bibdiv}
\begin{biblist}


\bib{Brylinski2009}{book}{
  title={Loop Spaces, Characteristic Classes and Geometric Quantization},
  author={Brylinski, J.L.},
  isbn={9780817647315},
  lccn={2007936854},
  series={Modern Birkh{\"a}user Classics},
  url={https://books.google.ca/books?id=R-TlBwAAQBAJ},
  year={2009},
  publisher={Birkh{\"a}user Boston}
}

\bib{Bryant1989}{article}{
	Author = {Bryant, R.~ L. and Salamon, S.~M.},
	Coden = {DUMJAO},
	Doi = {10.1215/S0012-7094-89-05839-0},
	Fjournal = {Duke Mathematical Journal},
	Issn = {0012-7094},
	Journal = {Duke Math. J.},
	Mrclass = {53C25 (53C57)},
	Mrnumber = {1016448 (90i:53055)},
	Mrreviewer = {Krzysztof Galicki},
	Number = {3},
	Pages = {829--850},
	Title = {On the construction of some complete metrics with exceptional holonomy},
	Url = {http://dx.doi.org/10.1215/S0012-7094-89-05839-0},
	Volume = {58},
	Year = {1989},
	Bdsk-Url-1 = {http://dx.doi.org/10.1215/S0012-7094-89-05839-0}}

\bib{Bryant2006}{article}{
	Author = {Bryant, R.~L.},
	Booktitle = {Proceedings of {G}\"okova {G}eometry-{T}opology {C}onference 2005},
	Mrclass = {53C10 (53C29)},
	Mrnumber = {2282011 (2007k:53019)},
	Mrreviewer = {Simon G. Chiossi},
	Pages = {75--109},
	Publisher = {G\"okova Geometry/Topology Conference (GGT), G\"okova},
	Title = {{Some remarks on $\rG_2$--structures}},
	Year = {2006}}
	
\bib{CHNP}{article}{
author = {Corti, Alessio and Haskins, Mark and Nordström, Johannes and Pacini, Tommaso},
doi = {10.1215/00127094-3120743},
journal = {Duke Math. J.},
month = {07},
number = {10},
pages = {1971--2092},
publisher = {Duke University Press},
title = { $\mathrm{G}_{2}$ -manifolds and associative submanifolds via semi-Fano 
 $3$ -folds},
volume = {164},
year = {2015}
}

\bib{Donaldson2009}{incollection}{
	Author = {Donaldson, S.~K. and Segal, E.~P.},
	Booktitle = {Surveys in differential geometry. Volume XVI. Geometry of special holonomy and related topics},
	Pages = {1--41},
	Publisher = {Int. Press, Somerville, MA},
	Series = {Surv. Differ. Geom.},
	Title = {Gauge theory in higher dimensions, {II}},
	Volume = {16},
	Year = {2011}
	}
	
\bib{McLean1998}{article}{
	Author = {McLean, R.~C.},
	Fjournal = {Communications in Analysis and Geometry},
	Issn = {1019-8385},
	Journal = {Comm. Anal. Geom.},
	Mrclass = {53C42 (32G10 32J18 53C25 58D27)},
	Mrnumber = {1664890 (99j:53083)},
	Mrreviewer = {N. J. Hitchin},
	Number = {4},
	Pages = {705--747},
	Title = {Deformations of calibrated submanifolds},
	Volume = {6},
	Year = {1998}}
	
\bib{Murray2010}{incollection}{
    AUTHOR = {Murray, Michael K.},
     TITLE = {An introduction to bundle gerbes},
 BOOKTITLE = {The many facets of geometry},
     PAGES = {237--260},
 PUBLISHER = {Oxford Univ. Press, Oxford},
      YEAR = {2010},
   MRCLASS = {53C08 (53C29 55R65)},
  MRNUMBER = {2681698 (2011h:53026)},
MRREVIEWER = {Semen S. Podkorytov},
       DOI = {10.1093/acprof:oso/9780199534920.003.0012},
       URL = {http://dx.doi.org/10.1093/acprof:oso/9780199534920.003.0012},
}

\bib{Haskins2015}{article}{
author = {Haskins, Mark and Hein, Hans-Joachim and Nordström, Johannes},
journal = {J. Differential Geom.},
month = {10},
number = {2},
pages = {213--265},
publisher = {Lehigh University},
title = {Asymptotically cylindrical Calabi Yau manifolds},
volume = {101},
year = {2015}
}

%

\bib{Hitchin99}{article}{
      author={{Hitchin}, N.},
       title={{Lectures on Special Lagrangian Submanifolds}},
        date={1999-07},
     journal={ArXiv Mathematics e-prints},
      eprint={math/9907034},
}

\bib{Huybrechts06b}{book}{
      author={Huybrechts, D.},
       title={Complex geometry},
      series={Universitext},
   publisher={Springer-Verlag, Berlin},
        date={2005},
        ISBN={3-540-21290-6},
      review={\MR{2093043 (2005h:32052)}},
}

\bib{Jaffe1980}{book}{
	Address = {Mass.},
	Author = {Jaffe, A. and Taubes, C.~H.},
	Isbn = {3-7643-3025-2},
	Mrclass = {81E10 (53C80 81-02)},
	Mrnumber = {MR614447 (82m:81051)},
	Mrreviewer = {Masatsugu Minami},
	Note = {Structure of static gauge theories},
	Owner = {thomas},
	Pages = {v+287},
	Publisher = {Birkh\"auser Boston},
	Series = {Progress in Physics},
	Timestamp = {2009.11.23},
	Title = {Vortices and monopoles},
	Volume = {2},
	Year = {1980}}

\bib{Joyce2003}{book}{
author={Joyce, Dominic},
editor={Ellingsrud, Geir
and Ranestad, Kristian
and Olson, Loren
and Str{\o}mme, Stein A.},
chapter={Riemannian Holonomy Groups and Calibrated Geometry},
title={Calabi-Yau Manifolds and Related Geometries: Lectures at a Summer School in Nordfjordeid, Norway, June 2001},
year={2003},
publisher={Springer Berlin Heidelberg},
address={Berlin, Heidelberg},
pages={1--68},
isbn={978-3-642-19004-9},
doi={10.1007/978-3-642-19004-9_1},
url={http://dx.doi.org/10.1007/978-3-642-19004-9_1}
}

\bib{Joyce2012}{report}{
	author={Joyce, D.D.},
	note={Talk delivered at the $\rG_2$ Days at UCL on 12 June 2012},
	title={Fantasies about counting associative 3--folds and coassociative 4--folds in $G_2$-manifolds},
	year={2012}
}

\bib{Kovalev2009}{article}{
title = {Deformations of compact coassociative 4-folds with boundary }
journal = {Journal of Geometry and Physics },
volume = {59},
number = {1},
pages = {63 - 73},
year = {2009},
issn = {0393-0440},
doi = {http://dx.doi.org/10.1016/j.geomphys.2008.09.002},
author = {Alexei Kovalev and Jason D. Lotay},
}

\bib{Oliveira2014}{article}{
      author={Oliveira, Goncalo},
       title={Monopoles on the Bryant-Salamon manifolds},
        date={2014},
        ISSN={0393-0440},
     journal={Journal of Geometry and Physics},
      volume={86},
      number={0},
       pages={599 \ndash  632},
  url={http://www.sciencedirect.com/science/article/pii/S0393044014002204},
}

\end{biblist}
\end{bibdiv}

\end{document}